\newtheorem{thm}{Theorem}
\newtheorem{lem}{Lemma}
\newtheorem{prop}{Proposition}
\newtheorem{defn}{Definition}
\newtheorem{rem}{Remark}
\begin{document}
\title{An adelic extension of the Jones polynomial}

\author{J. Juyumaya}
\address{Departamento de Matem\'aticas, Universidad de Valpara\'{\i}so \\
Gran Breta\~na 1091, Valpara\'{\i}so, Chile.}
\email{juyumaya@uvach.cl}

\author{S. Lambropoulou}
\address{ Departament of Mathematics,
National Technical University of Athens,
Zografou campus, GR-157 80 Athens, Greece.}
\email{sofia@math.ntua.gr}
\urladdr{http://www.math.ntua.gr/$\tilde{~}$sofia}

\thanks{Both authors were partially supported by Fondecyt (grant 1085002), NTUA and  Dipuv.}

\keywords{knots, links, braids, adelic, Yokonuma-Hecke algebras, Markov trace, isotopy invariants.}

\subjclass{57M27, 20F38, 20F36, 20C08}

\date{}
\maketitle
\begin{abstract}
In this paper we represent the classical braids in the Yokonuma--Hecke and the adelic Yokonuma--Hecke algebras.  More precisely, we define the completion of the framed braid group and we introduce the adelic Yokonuma--Hecke algebras, in analogy to the $p$--adic framed braids and the $p$--adic Yokonuma--Hecke algebras introduced in \cite{jula,jula2}. We further construct an adelic Markov trace, analogous to the $p$--adic Markov trace constructed in \cite{jula2}, and using the traces in \cite{ju} and the adelic  Markov trace we define topological invariants of classical knots and links, upon imposing some condition. Each invariant satisfies a cubic skein relation coming from the Yokonuma--Hecke algebra.
\end{abstract}

\section{Introduction}

The classical braid group on $n$ strands $B_n$ is generated by the elementary braids $\sigma_1 , \ldots , \sigma_{n-1},$ under the defining {\it braid relations}:
$$
\sigma_{i}\sigma_{i+1}\sigma_{i}= \sigma_{i+1}\sigma_{i}\sigma_{i+1}\qquad \text{and}\qquad
\sigma_i\sigma_j = \sigma_j\sigma_i \quad \text{for}\quad \vert i-j\vert >1.
$$
Geometrically, $\sigma_i$ is a positive crossing between the $i$th and the $(i+1)$st strand and $\sigma_i^{-1}$ is the opposite crossing. The operation in $B_n$ corresponds to the concatenation of two braids and the braid relations reflect allowed topological moves. Closing a braid $\beta$ by joining with simple arcs the corresponding top and bottom endpoints of $\beta$ gives rise to an oriented knot or link, denoted $\widehat{\beta}$.  By the classical Alexander theorem, an oriented knot or link can be also isotoped to the closure of a braid. {\it Isotopy} is the notion of topological equivalence for knots and links. Further, by the classical Markov theorem, isotopy classes of oriented knots or links are in bijective correspondence with equivalence classes of  braids in $\cup_nB_n$ under the two moves:
\vspace{.07in}

{\it (i) Conjugation in $B_n$:} \ $\alpha \beta \sim \beta \alpha \qquad \text{and}\qquad$ {\it (ii) Markov move:} $\alpha \sim \alpha{\sigma_n}^{\pm 1}, \ \alpha \in B_n$
\vspace{.07in}

Using the above and Ocneanu's Markov trace on the Iwahori--Hecke algebra of type A, $H_n(q)$, V.F.R. Jones constructed in \cite{jo} the 2--variable Jones or HOMFLYPT polynomial, a new isotopy invariant of oriented knots and links. The algebra $H_n(q)$ can be described naturally as a quotient of the group algebra ${\Bbb C} B_n$ over the quadratic relations:
\begin{equation}\label{quadrhecke}
g_i^2 = (q-1) g_i + q \qquad \mbox{for all $i$}
\end{equation}

 The  Yokonuma--Hecke algebra ${\rm Y}_{d,n}(u), \  d\in \Bbb N,$ is a similar algebraic object and has a natural topological interpretation as quotient of the modular framed braid group algebras ${\Bbb C} {\mathcal F}_{d,n}$ (classical framed braids with framings modulo $d$) over certain quadratic relations, see Eqs. \ref{quadr}. Originally, the algebras ${\rm Y}_{d,n}(u)$ were  introduced by T. Yokonuma \cite{yo} in the representation theory of finite Chevalley groups and they are natural generalizations of the Hecke algebras $H_n(q)$. Indeed, for  $d=1$ the algebra ${\rm Y}_{1,n}(u)$ coincides with the algebra $H_n(q)$. In the above topological interpretation, $d=1$ means all framings zero, so the algebra ${\rm Y}_{1,n}(u)$ is really related to classical braids (with no framings). In \cite{ju} Juyumaya constructed a Markov trace on the algebras ${\rm Y}_{d,n}(u)$, which for $d=1$ coincides with the Ocneanu trace. Further, in \cite{jula} the authors introduced the $p$--adic framed braids and the $p$--adic Yokonuma--Hecke algebras, while in \cite{jula2} they constructed a $p$--adic Markov trace. This was used, together with the trace in \cite{ju}, in order to construct Jones--type isotopy invariants of framed links, upon imposing a certain $E$--condition to the trace parameters, according to the Markov braid equivalence. Finally, in \cite{jula3} they constructed a monoid representation of the singular braid monoid to ${\rm Y}_{d, n}(u)$. Then the trace of \cite{ju} is also a Markov trace on the singular braid monoid, so Jones--type invariants for singular knots were constructed, assuming the $E$--condition.

\smallbreak

In the present paper we first relate the  Yokonuma--Hecke algebras ${\rm Y}_{d,n}(u)$, for $d\neq 1$, to classical knots and links via a natural homomorphism of the classical braid group (Eq.~\ref{repbn}). We further define {\it the completion of the framed braid group} and we introduce {\it the adelic Yokonuma--Hecke algebra} (Section \ref{adelicbrrep}), into which the classical braid group  maps also homomorphically. Then, using the Markov traces in \cite{ju} (for different values of $d$) we construct an infinite family of 2--variable Jones--type invariants of oriented classical knots and links (Section \ref{adelicjones}), upon imposing the $E$--condition (Section \ref{econdition}).
 We also construct, in analogy to \cite{jula2}, an adelic Markov trace (Section \ref{adelictr}), which we use for defining an isotopy invariant of classical knots and links, an {\it  adelic extension of the 2--variable Jones polynomial} (Section~\ref{adelicjones}), upon imposing the $E$--condition. The $E$--condition and the $E$--system are discussed in Section~\ref{econdition}. As far as the braid generators are concerned, the first `closed' relations in the Yokonuma--Hecke algebra are {\it cubic relations} (Eqs. \ref{cubic1}), which also pass to the level of each  invariant in the form of a {\it cubic skein relation} (Eq. \ref{skein}).

\smallbreak

The above--mentioned results are relatively new and computations with the algebras ${\rm Y}_{d,n}(u)$ are very complicated. We are now in the process of creating a computing package. Yet, we believe that our invariants are different from the HOMFLYPT polynomial, mainly for the following reasons. Firstly, the differences of the two algebras, $H_n(q)$ and ${\rm Y}_{d,n}(u)$, and of their quadratic relations. The structure of the Yokonuma--Hecke algebra is much more subtle and complicated than that of the classical Hecke algebra, even made `framed'. In the latter case the quadratic relations Eq.~\ref{quadrhecke} would remain the same as in $H_n(q)$, hence we would talk about the framed HOMFLYPT polynomial. Secondly, the appearance of the cubic relations on the Yokonuma--Hecke algebra level, as the first `closed' relations of the braid generators, which induce a cubic skein relation for each  invariant. Finally, the mere fact that we needed to impose the $E$--condition to the trace parameters in order to obtain a knot invariant, something not needed in the case of the Ocneanu trace. In fact, the trace in \cite{ju} is the first Markov trace in the literature that does not rescale directly in order to yield knot invariants. (Even the fact that the complicated $E$--system has non--trivial solutions was a surprise to us, see Section~\ref{econdition} and \cite{jula2}).

In an effort to keep this paper light we omit some technical details, which are mostly to be found in \cite{jula2}.

\smallbreak

The Yokonuma--Hecke algebras are very versatile algebraic objects, in the sense that they can be used for completely different topological interpretations: to framed braids, to classical braids, to singular braids and, most recently, to transversal and virtual braids, and they comprise the only examples we know of algebras having this property.

\section{An adelic representation of the braid group}\label{adelicbrrep}

\subsection{\it The Yokonuma--Hecke algebra}

Fix $u \in {\Bbb C}\backslash \{0,1\}$. Given two  positive integers $d$ and $n$, we denote ${\rm Y}_{d,n} =  {\rm Y}_{d,n}(u)$ the Yokonuma--Hecke algebra, which is a unital associative algebra over ${\Bbb C}$,  defined by the generators:
$$
1, g_1, \ldots, g_{n-1}, t_1, \ldots, t_{n}
$$
subject to the following relations:
\begin{equation}\label{modular}
\begin{array}{rcll}
g_ig_j & = & g_jg_i & \mbox{for $\vert i-j\vert > 1$}\\
g_ig_jg_i & = & g_jg_ig_j & \mbox{for $ \vert i-j\vert = 1$} \\
t_i t_j & =  &  t_j t_i &  \mbox{for all $ i,j$} \\
t_j g_i & = & g_i t_{s_i(j)} & \mbox{for all $ i,j$}\\
t_j^d   & =  &  1 & \mbox{for all $j$}
\end{array}
\end{equation}
where $s_i(j)$ is the result of applying the transposition $s_i=(i, i+1)$ to $j$ (in particular $t_i g_i  =  g_i t_{i+1}$  and  $ t_{i+1}g_i  = g_i t_i $), together with the extra quadratic relations:
\begin{equation}\label{quadr}
g_i^2 = 1 + (u-1) \, e_{d,i} - (u-1) \, e_{d,i} \, g_i\qquad \mbox{for all $i$}
\end{equation}
 where
\begin{equation}\label{edi}
e_{d,i} :=\frac{1}{d}\sum_{m=0}^{d-1}t_i^m t_{i+1}^{-m}.
\end{equation}
>From the defining relations Eqs.~\ref{modular} and~\ref{quadr} it is easy to check that the elements $e_{d,i}$ are idempotents and that they satisfy the following relations (compare with Lemma~1 in \cite{jula3}).
\begin{equation}\label{edirels}
\begin{array}{rcll}
e_ie_j & = & e_j e_i & \\
e_i g_j & = & g_je_i & \mbox{for  $j=i$  and for $\vert i - j \vert >1$} \\
e_j g_ig_j & =  &  g_ig_je_i &  \mbox{for $\vert i - j \vert =1$}. \\
\end{array}
\end{equation}

\begin{rem}\label{rem1}\rm
For all $1\leq i\leq  n$, let $C_{d,i}=\{1, t_i, t_i^2, \ldots , t_i^{d-1}\}$ denote the cyclic group containing all possible framings modulo $d$ of the $i$th strand of a framed braid on $n$ strands. Notice that $C_{d,i} \simeq {\Bbb Z}/d{\Bbb Z}$ for all $i$. We also define  the group $H:=C_{d,1}\times C_{d,2}\times \ldots \times C_{d,n} \simeq ({\Bbb Z}/d{\Bbb Z})^n$. From the defining relations among the $t_i$'s we deduce that the groups $C_{d,i}$ and $H$ can be regarded  inside ${\rm Y}_{d,n}$.
\end{rem}

  {\it The modular framed braid group} ${\mathcal F}_{d,n}$ contains framed braids on $n$ strands, but  with framings modulo $d$. It is generated by the braiding generators $\sigma_i$ and the framing generators $t_1,\ldots,t_n$, where $t_j$ stands for the identity braid with framing one on the $j$th strand and framing zero on the other strands.  Corresponding the braiding generators $\sigma_i$  to the algebra generators $g_i$, relations Eq.~\ref{modular} furnish a presentation for ${\mathcal F}_{d,n}$ and  the  Yokonuma--Hecke algebra ${\rm Y}_{d,n}$ is a quotient of the modular framed braid group algebra ${\Bbb C} {\mathcal F}_{d,n}$  over the quadratic relations Eq.~\ref{quadr}.  The elements $e_{d,i}$ are in the algebra ${\Bbb C} {\mathcal F}_{d,n}$ as well as in the quotient algebra ${\rm Y}_{d,n}$ and they are expressions of the framing generators $t_i, t_{i+1}$.

\subsection{\it The adelic Yokonuma--Hecke algebra}

Let  ${\Bbb N}$ denote the set of positive integers regarded as a directed set with the usual order. Let also  ${\Bbb N}^{\sim}$ denote the directed set of positive integers regarded with respect to the partial order defined by the divisibility relation. The notation $d\vert d^{\prime}$ means $d$ divides $d^{\prime}$.

For $d$, $d^{\prime}\in {\Bbb N}$ with $d\vert d^{\prime}$ we consider the natural connecting ring homomorphism $\rho ^{d^{\prime}}_d $, defined in \cite{jula2}, Eq.~1.17:
\begin{equation}\label{rhoy}
\rho ^{d^{\prime}}_d :{\rm Y}_{d^{\prime},n} \longrightarrow
{\rm Y}_{d,n}
\end{equation}

More precisely, we denote  $\vartheta^{d^{\prime}}_d$ the natural epimorphism:
\begin{equation}\label{varthetad}
\begin{array}{cccc}
 \vartheta^{d^{\prime}}_d : & {\Bbb Z}/d^{\prime}{\Bbb Z} & \longrightarrow & {\Bbb Z}/d{\Bbb Z}\\
 & m  & \mapsto &   m \,(\text{mod}\, d)
\end{array}
\end{equation}
 The inverse limit $\widehat{{\Bbb Z}}$ of the inverse system of groups $({\Bbb Z}/d{\Bbb Z}, \vartheta^{d^{\prime}}_d)$ indexed by ${\Bbb N}^{\sim}$ is called {\it the completion of ${\Bbb Z}$}:
$$
\widehat{{\Bbb Z}}=\varprojlim_{d\in {\Bbb N}^{\sim}} {{\Bbb Z}/d{\Bbb Z}}
$$

Our references for inverse limits are mainly \cite{riza} and \cite{wi}.

\smallbreak
By componentwise multiplication, epimorphism Eq.~\ref{varthetad} defines the epimorphism:
\begin{equation}\label{varpid}
\begin{array}{cccc}
\varpi^{d^{\prime}}_{d} : & ({\Bbb Z}/d^{\prime}{\Bbb Z})^n  & \longrightarrow &
 ({\Bbb Z}/d{\Bbb Z})^n
 \end{array}
\end{equation}
 Extension to the $B_n$--part by the identity map yields the epimorphism:
\begin{equation}\label{varpidid}
\begin{array}{cccc}
  \varpi^{d^{\prime}}_{d} \cdot {\rm id} : &  {\mathcal F}_{d^{\prime},n}  & \longrightarrow &  {\mathcal F}_{d,n}
\end{array}
\end{equation}
\begin{defn}\rm
 {\it The completion ${\mathcal F}_{\infty , n}$ of the framed braid group ${\mathcal F}_n$} is defined as the inverse limit of the inverse system of groups $({\mathcal F}_{d,n} , \, \varpi^{d^{\prime}}_{d}\cdot {\rm id})$:
$$
{\mathcal F}_{\infty , n} := \varprojlim_{d\in {\Bbb N}^{\sim}} {\mathcal F}_{d,n}
$$
\end{defn}

The linear extension of map Eq.~\ref{varpidid} yields an algebra epimorphism:
\begin{equation}\label{varrho}
\varrho ^{d^{\prime}}_d : {\Bbb C} {\mathcal F}_{d^{\prime},n}\longrightarrow {\Bbb C} {\mathcal F}_{d,n}
\end{equation}

\begin{rem}\rm
The braid group $B_n$ acts on $\widehat{{\Bbb Z}}^{\,n}$ by permuting the factors, so we may consider the group $\widehat{{\Bbb Z}}^{\,n}\rtimes B_n$. It is easy to construct an isomorphism between the groups $\widehat{{\Bbb Z}}^{\,n}\rtimes B_n$ and ${\mathcal F}_{\infty , n}$ (proof analogous to Theorem~1 in \cite{jula}). We note, though, that this isomorphism does not carry through on the level of the algebras ${\Bbb C} (\widehat{{\Bbb Z}}^{\,n}\rtimes B_n)$ and $\varprojlim_{d\in {\Bbb N}^{\sim}} {\Bbb C} {\mathcal F}_{d,n}$   (see \cite{jula2} for more details).
\end{rem}

Passing now to the quotient algebras by relations Eq.~\ref{quadr} we obtain the following algebra epimorphism:
\begin{equation}\label{rhoy}
\rho ^{d^{\prime}}_d :{\rm Y}_{d^{\prime},n} \longrightarrow
{\rm Y}_{d,n}
\end{equation}

\begin{defn}\rm
 {\it The adelic Yokonuma--Hecke} algebra ${\rm Y}_{\infty,n}(u) = {\rm Y}_{\infty,n}$ is defined as the inverse limit of the inverse system of rings $({\rm Y}_{d,n}, \rho_d^{d^{\prime}})$ indexed by ${\Bbb N}^{\sim}$:
$$
  {\rm Y}_{\infty,n}  =  \varprojlim_{d\in {\Bbb N}^{\sim}} {\rm Y}_{d,n}
$$
\end{defn}

Hence, elements in ${\rm Y}_{\infty,n}$ are infinite sequences of elements in the algebras ${\rm Y}_{d,n}$, for $d\in {\Bbb N}^{\sim}$, which are coherent in the sense of maps Eqs.~\ref{varthetad} --~\ref{rhoy}.
Moreover, the definition of the connecting  maps $\rho ^{d^{\prime}}_d$ do not involve the elements  $g_i$, so we shall  denote also by $g_i$ the elements in  ${\rm Y}_{\infty,n} $ corresponding to the infinite
 constant sequence  $\left( g_i\right)$.

\smallbreak
For all $0\leq i\leq n-1$  define now the groups $H_{d,i}$ as follows:
$$
H_{d, i} =\{1, t_it_{i+1}^{-1}, t_i^2t_{i+1}^{-2}, \ldots , t_i^{d-1}t_{i+1}\}
$$
In this notation, the element $e_{d,i}$ in Eq.~\ref{edi} is the  average of the elements of the group  $H_{d,i}$:
$$
e_{d,i} = \frac{1}{d}\sum_{x\in H_{d,i}}x
$$
Then  $\rho ^{d^{\prime}}_d(H_{d^{\prime},i})= H_{d,i}$ for all $d\vert d^{\prime}$. Hence, we deduce the following result.

\begin{lem} For all $i$ and for $d$, $d^{\prime}$ such that $d\vert d^{\prime}$, we have:
$$
\rho ^{d^{\prime}}_d (e_{d^{\prime},i}) = e_{d,i}.
$$
\end{lem}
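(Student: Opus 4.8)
The plan is to exploit the description of $e_{d,i}$ as the normalized sum (average) over the cyclic group $H_{d,i}$, together with the fact recorded just above the statement, namely that $\rho^{d'}_d$ maps $H_{d',i}$ onto $H_{d,i}$, and to upgrade this surjectivity into a precise fiber–counting statement. First I would use that $\rho^{d'}_d$ is a homomorphism of ${\Bbb C}$–algebras, hence in particular ${\Bbb C}$–linear, so that applying it to $e_{d',i}=\frac{1}{d'}\sum_{x\in H_{d',i}}x$ gives $\rho^{d'}_d(e_{d',i})=\frac{1}{d'}\sum_{x\in H_{d',i}}\rho^{d'}_d(x)$. Everything then reduces to understanding how $\rho^{d'}_d$ collapses the elements of $H_{d',i}$.

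Second, I would pin down the restriction $\rho^{d'}_d\colon H_{d',i}\to H_{d,i}$. Since the connecting maps are built from the reduction epimorphism $\vartheta^{d'}_d$ of Eq.~\ref{varthetad} applied componentwise to the framings (and the identity on the braiding part), this restriction is exactly the map $t_i^m t_{i+1}^{-m}\mapsto t_i^{m}t_{i+1}^{-m}$ with the exponent read modulo $d$. Under the identifications $H_{d',i}\simeq {\Bbb Z}/d'{\Bbb Z}$ and $H_{d,i}\simeq {\Bbb Z}/d{\Bbb Z}$ given by $t_i^m t_{i+1}^{-m}\leftrightarrow m$, it is precisely $\vartheta^{d'}_d$; here one uses that $H_{d,i}$ is cyclic of order exactly $d$, generated by $t_it_{i+1}^{-1}$, which follows from $t_j^d=1$ and the independence of the framing generators. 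In particular the restriction is a group epimorphism whose kernel has order $d'/d$.

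Finally, because a surjective homomorphism of finite groups has all fibers of the same cardinality as its kernel, each $y\in H_{d,i}$ is hit by exactly $d'/d$ elements of $H_{d',i}$, so that $\sum_{x\in H_{d',i}}\rho^{d'}_d(x)=\frac{d'}{d}\sum_{y\in H_{d,i}}y$. Substituting this into the linear expression above yields
\[
\rho^{d'}_d(e_{d',i})=\frac{1}{d'}\sum_{x\in H_{d',i}}\rho^{d'}_d(x)=\frac{1}{d'}\cdot\frac{d'}{d}\sum_{y\in H_{d,i}}y=\frac{1}{d}\sum_{y\in H_{d,i}}y=e_{d,i},
\]
as claimed. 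The only real content lies in the second step: verifying that the restriction of $\rho^{d'}_d$ to $H_{d',i}$ is genuinely the reduction epimorphism with kernel of uniform fiber size $d'/d$. The surjectivity itself is already supplied by the stated identity $\rho^{d'}_d(H_{d',i})=H_{d,i}$, so the main obstacle is merely the bookkeeping of the kernel order, which amounts to confirming that $|H_{d',i}|=d'$ and $|H_{d,i}|=d$; once the uniform fiber count is secured, the conclusion is immediate from linearity.
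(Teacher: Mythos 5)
Your proof is correct and takes essentially the same route as the paper: the paper simply observes that $e_{d,i}$ is the average of the group $H_{d,i}$ and that $\rho^{d^{\prime}}_d(H_{d^{\prime},i}) = H_{d,i}$, and then deduces the lemma without further comment. Your fiber-counting step (each element of $H_{d,i}$ has exactly $d^{\prime}/d$ preimages in $H_{d^{\prime},i}$, so the average maps to the average) is precisely the implicit deduction the paper leaves to the reader.
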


We shall denote by $e_i$  the sequence $\left( e_{d, i} \right)_{d\in {\Bbb N}^{\sim}}$ in
${\rm Y}_{\infty,n}$\,:
\begin{equation}\label{ei}
e_i := \left( e_{d, i} \right)_{d\in {\Bbb N}^{\sim}}
\end{equation}

It follows easily from relations Eqs.~\ref{edirels} and~\ref{quadr} that the adelic elements $e_i$ satisfy the following relations (compare with Proposition~10 and Theorem~3 in \cite{jula}).
\begin{prop}
For all $i$ the following relations hold  in ${\rm Y}_{\infty,n}$:
\begin{enumerate}
\item $e_ie_j = e_j e_i$
\item $e_i g_j = g_je_i$  for  $j=i$  and for $\vert i - j \vert >1$
\item $e_j g_ig_j = g_ig_je_i$ for $\vert i - j \vert =1$
\item $g_i^ 2 = 1 + (u-1)e_ig_i - (u-1)g_i.$
\end{enumerate}
\end{prop}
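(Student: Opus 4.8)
The plan is to use the defining feature of the inverse limit ${\rm Y}_{\infty,n}=\varprojlim_{d}{\rm Y}_{d,n}$: its ring operations are performed slotwise, so a polynomial identity among elements of ${\rm Y}_{\infty,n}$ holds if and only if it holds in each component ${\rm Y}_{d,n}$. Indeed, an element of ${\rm Y}_{\infty,n}$ is a coherent sequence $(x_d)_{d\in{\Bbb N}^{\sim}}$, sums and products are taken componentwise, and the connecting maps $\rho^{d'}_d$ of Eq.~\ref{rhoy} are ring homomorphisms; hence a finite sum or product of coherent sequences is again coherent and its $d$-th entry is the corresponding sum or product of $d$-th entries. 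So to prove each of (1)--(4) it suffices to project onto an arbitrary level $d$ and check the resulting identity in ${\rm Y}_{d,n}$.

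First I would confirm that every element appearing in the statement is a genuine element of ${\rm Y}_{\infty,n}$. As noted after Eq.~\ref{rhoy}, each $g_i$ is the constant sequence $(g_i)_d$, which is coherent because the maps $\rho^{d'}_d$ fix the braiding generators; and by the preceding Lemma $\rho^{d'}_d(e_{d',i})=e_{d,i}$ for $d\mid d'$, so $e_i=(e_{d,i})_d$ of Eq.~\ref{ei} is coherent as well. Consequently each expression occurring in (1)--(4), being a finite polynomial in the $g_i$ and $e_i$, is well defined in ${\rm Y}_{\infty,n}$, with $d$-th entry obtained by substituting $e_{d,i}$ for each $e_i$ and keeping each $g_i$.

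With this reduction, each relation becomes a known finite-level identity. Projecting (1), (2) and (3) onto the $d$-th slot yields precisely the three relations of Eq.~\ref{edirels} for the idempotents $e_{d,i}$ in ${\rm Y}_{d,n}$, which hold there by the defining relations Eqs.~\ref{modular} and~\ref{quadr}; and projecting (4) recovers the quadratic relation Eq.~\ref{quadr} (with $e_{d,i}$ in place of $e_i$). Since $d$ was arbitrary, all four identities hold slotwise, hence in ${\rm Y}_{\infty,n}$.

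The computations are routine; the only point demanding care---and the one I regard as the crux---is the inverse-limit bookkeeping: verifying that the operations descend componentwise and that the sequences $g_i$ and $e_i$ are coherent, so that the slotwise projection of each relation is legitimate. Once this is in place, the proposition follows immediately from Eqs.~\ref{edirels} and~\ref{quadr}, with no further calculation required.
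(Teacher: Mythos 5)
Your proposal is correct and takes essentially the same approach as the paper, which simply asserts that the relations ``follow easily'' from Eqs.~\ref{edirels} and~\ref{quadr} --- that is, exactly the componentwise verification in the inverse limit that you spell out, using the coherence of the constant sequences $(g_i)$ and of $e_i=(e_{d,i})_d$ guaranteed by the preceding Lemma. One caveat: item (4) as printed in the paper appears to contain a typographical scrambling (its literal projection to level $d$ would read $g_i^2 = 1 + (u-1)e_{d,i}g_i - (u-1)g_i$, which is false in ${\rm Y}_{d,n}$); the intended relation is $g_i^2 = 1 + (u-1)e_i - (u-1)e_ig_i$, the slotwise image of Eq.~\ref{quadr}, which is precisely how you read it.
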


\subsection{\it Representing the braid group}

The defining relations  of ${\rm Y}_{d,n}$ imply that the map:
\begin{equation}\label{repbn}
\begin{array}{cccc}
\flat_{d,n} : &  B_n &  \longrightarrow &  {\rm Y}_{d,n} \\
& \sigma_i & \mapsto & g_i
\end{array}
\end{equation}
defines a representation of the classical braid group $B_n$ in ${\rm Y}_{d,n}$. Under this representation the generators $g_i$ of the algebra ${\rm Y}_{d,n}$ correspond to the braid generators $\sigma_i$. The generators $t_j$, though,  loose their initial topological interpretation as framing generators and they are just considered formally as elements in the algebra.

\smallbreak
Further, for all $d, d^{\prime}, d^{\prime\prime}$ such that $d\vert d^{\prime}$ and $d^{\prime}\vert d^{\prime\prime}$ we have the following commutative diagram:
\begin{equation}\label{rep}
\begin{diagram}
\node{\cdots}
\node{B_n}
\arrow{w,t}{} \arrow{s,l}{\flat_{d,n}} \node{B_n}\arrow{w,t}{{\rm Id}}
\arrow{s,l}{\flat_{d^{\prime},n}}
\node{B_n}
\arrow{w,t}{{\rm Id}} \arrow{s,l}{\flat_{d^{\prime\prime},n}} \node{\cdots}\arrow{w,t}{}
\\
\node{\cdots}
\node{{\rm Y}_{d,n}}
\arrow{w,t}{}
\node{{\rm Y}_{d^{\prime},n}}\arrow{w,t}{\rho_d^{d^{\prime}}}
\node{{\rm Y}_{d^{\prime\prime},n}}
\arrow{w,t}{\rho_{d^{\prime}}^{d^{\prime\prime}}}
\node{\cdots}\arrow{w,t}{}
\end{diagram}
\end{equation}

By taking inverse limits in the above diagram we obtain the following representation of the classical braid group  $B_n$ in the adelic Yokonuma--Hecke algebra:
\begin{equation}\label{adelicrepbn}
\flat_{\infty,n} : B_n
 \longrightarrow {\rm Y}_{\infty,n}
\end{equation}
where:
$$
\flat_{\infty,n}:=  \varprojlim_{d\in {\Bbb N}^{\sim}} \flat_{d,n}
$$

\section{An adelic Markov trace}\label{adelictr}

\subsection{\it The modular Markov trace ${\rm tr}_{d}$}

It is known that the Yokonuma--Hecke algebra supports a Markov trace \cite{ju}. More precisely, for fixed $d$ we consider the inductive system $({\rm Y}_{d,n})_{n\in {\Bbb N}}$ associated
to the natural inclusions ${\rm Y}_{d,n} \subset {\rm Y}_{d,n+1}$  for all  $n\in{\Bbb N}$. Let ${\rm Y}_{d, \infty}$ be the corresponding inductive limit. In \cite{ju} the following theorem is proved.

\begin{thm}[Juyumaya, 2004]\label{trace}
Let $z$, $x_1$, $\ldots$, $x_{d-1}\in {\Bbb C}$ and let $d$ be a positive integer. For all $n\in{\Bbb N}$
there exists a unique linear map ${\rm tr}_{d}=\left({\rm tr}_{d,n}\right)_{n\in {\Bbb N}} $:
$$
{\rm tr}_{d} : {\rm Y}_{d, \infty} \longrightarrow {\Bbb C}
$$
 satisfying the rules:
 $$
\begin{array}{rcll}
{\rm tr}_{d,n}(ab) & = & {\rm tr}_{d,n}(ba) & \\
{\rm tr}_{d,n}(1) &  = & 1 & \\
{\rm tr}_{d,n+1}(ag_n) & = & z\,{\rm tr}_{d,n}(a)& \qquad (a \in {\rm Y}_{d,n})\\
{\rm tr}_{d, n+1}(at_{n+1}^m) & = & x_m{\rm  tr}_{d,n}(a) & \qquad ( a \in {\rm Y}_{d,n}\,, \, 1\leq m\leq d-1).
\end{array}
$$
\end{thm}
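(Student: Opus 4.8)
The plan is to construct the maps ${\rm tr}_{d,n}$ by induction on $n$ and to read off both existence and uniqueness from a suitable inductive linear basis of ${\rm Y}_{d,n+1}$ relative to the subalgebra ${\rm Y}_{d,n}$. The base case $n=1$ is forced: ${\rm Y}_{d,1}$ is the commutative algebra ${\Bbb C}[t_1]/(t_1^d-1)$, so the normalization and the $t_{n+1}^m$--rule compel ${\rm tr}_{d,1}(1)=1$ and ${\rm tr}_{d,1}(t_1^m)=x_m$, and linearity determines ${\rm tr}_{d,1}$ uniquely. For the inductive step the key structural input is a normal form: every element of ${\rm Y}_{d,n+1}$ is a ${\Bbb C}$--linear combination of monomials $a\,\xi$ with $a\in {\rm Y}_{d,n}$ and $\xi$ ranging over an explicit finite set $N_{n+1}$ built only from the new generators $g_n$ and $t_{n+1}$ (so that $\dim {\rm Y}_{d,n+1}=d(n+1)\dim {\rm Y}_{d,n}$, matching $\dim {\rm Y}_{d,n}=d^n n!$). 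I would obtain this by repeatedly applying the braid relations, the commutation relations $t_j g_i=g_i t_{s_i(j)}$, and the quadratic relation Eq.~\ref{quadr} to push the index $n+1$ to the right and to lower the degree in $g_n$, exactly as in the Hecke case but tracking the framings.

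Granting such a spanning set, uniqueness is immediate. Using the trace property, one moves the rightmost occurrence of $g_n$ or $t_{n+1}^m$ to the end against an element of ${\rm Y}_{d,n}$, whereupon the $g_n$--rule and the $t_{n+1}^m$--rule rewrite ${\rm tr}_{d,n+1}$ on that monomial as ${\rm tr}_{d,n}$ of an element of ${\rm Y}_{d,n}$; since every monomial thus descends to the lower algebra, the inductive hypothesis forces all values. For existence I would instead \emph{define} ${\rm tr}_{d,n+1}$ on the spanning set by declaring ${\rm tr}_{d,n+1}(a\,\xi)$ to be the value dictated by the four rules (taking $\xi=1$ via ${\rm tr}_{d,n}$, and $\xi$ containing $g_n$ or $t_{n+1}^m$ via the corresponding rule), then extend by linearity. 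One must first check that this is independent of the chosen expression, which is automatic once $N_{n+1}$ is shown to be an actual basis; hence establishing freeness of ${\rm Y}_{d,n+1}$ as a left ${\rm Y}_{d,n}$--module is the cornerstone of the construction.

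The substantive part is verifying the trace property ${\rm tr}_{d,n+1}(ab)={\rm tr}_{d,n+1}(ba)$. By bilinearity it suffices to treat ${\rm tr}_{d,n+1}(ay)={\rm tr}_{d,n+1}(ya)$ with $a$ arbitrary and $y$ an algebra generator $g_1,\dots,g_n,t_1,\dots,t_{n+1}$, and then to bootstrap to arbitrary $b$ by writing $b$ as a product of generators and inducting on word length, peeling off one generator at a time. For the generators $y\in {\rm Y}_{d,n}$, namely $g_1,\dots,g_{n-1}$ and $t_1,\dots,t_n$, the identity reduces through the normal form and the Markov rules to the trace property of ${\rm tr}_{d,n}$, which holds by induction.

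The main obstacle is the two remaining cases $y=g_n$ and $y=t_{n+1}$, where the computation genuinely leaves ${\rm Y}_{d,n}$ and must be closed up using the quadratic relation. Substituting $g_n^2=1+(u-1)e_{d,n}-(u-1)e_{d,n}g_n$ (Eq.~\ref{quadr}) produces the idempotent $e_{d,n}=\frac1d\sum_{m}t_n^m t_{n+1}^{-m}$, which reintroduces the strand $n+1$ into the formulas; the relations Eqs.~\ref{edirels} together with $t_j g_i=g_i t_{s_i(j)}$ must then be used to re-expose a single $g_n$ or a single power $t_{n+1}^m$ at the right end so that the Markov rules apply once more. Checking that the two evaluations of ${\rm tr}_{d,n+1}(a g_n)$ versus ${\rm tr}_{d,n+1}(g_n a)$, and likewise for $t_{n+1}$, agree is where the parameters $z$ and $x_1,\dots,x_{d-1}$ interact, and it is the delicate bookkeeping at the heart of the theorem; this same interaction is what later necessitates the $E$--condition upon passing to knot invariants. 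I expect this verification, carried out on the finitely many generator--versus--basis--element pairs, to consume the bulk of the proof, the remainder being the module--freeness lemma and the routine linear extension.
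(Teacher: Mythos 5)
Your overall plan---induction on $n$, an inductive linear basis of ${\rm Y}_{d,n+1}$ relative to ${\rm Y}_{d,n}$ in which $g_n$ or $t_{n+1}^m$ appears at most once, uniqueness forced by the four rules, existence by defining the trace on basis words and then verifying ${\rm tr}(ab)={\rm tr}(ba)$ generator by generator---is exactly the strategy behind Theorem~\ref{trace} (the paper itself does not reprove it; it cites \cite{ju} and records precisely this basis fact as the key ingredient). However, there is a genuine error at what you yourself call the cornerstone. The claim that ${\rm Y}_{d,n+1}$ is free as a left ${\rm Y}_{d,n}$--module on a set $N_{n+1}$ of monomials \emph{built only from the new generators} $g_n$ and $t_{n+1}$ is false for $n\geq 2$. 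Indeed, using $t_{n+1}g_n=g_nt_n$ and the quadratic relation Eq.~\ref{quadr}, every word in $g_n$ and $t_{n+1}$ reduces, modulo left multiplication by elements of ${\rm Y}_{d,n}$, to one of the $2d$ elements $t_{n+1}^m$ or $t_{n+1}^m g_n$ $(0\leq m\leq d-1)$; hence such monomials span a left submodule of dimension at most $2d\,\dim {\rm Y}_{d,n}$, while your own (correct) count demands rank $d(n+1)$. Concretely, $g_ng_{n-1}$ lies outside that span: expanded in the actual basis it is a ``staircase'' word, independent of all words of the form $w\,t_{n+1}^m$ and $w\,t_{n+1}^mg_n$ with $w\in {\rm Y}_{d,n}$.

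The correct inductive basis---the one the paper alludes to, and the one your appeal to ``exactly as in the Hecke case'' would in fact produce---consists of the words $w\,t_{n+1}^m$ and $w\,g_ng_{n-1}\cdots g_it_i^m$, where $w$ runs over a basis of ${\rm Y}_{d,n}$, $1\leq i\leq n$, $0\leq m\leq d-1$; these necessarily involve the \emph{older} generators $g_{n-1},\ldots,g_i,t_i$. Equivalently, as vector spaces,
$$
{\rm Y}_{d,n+1} \;=\; {\rm Y}_{d,n}\;\oplus\;\bigoplus_{m=1}^{d-1}{\rm Y}_{d,n}\,t_{n+1}^m\;\oplus\;{\rm Y}_{d,n}\,g_n\,{\rm Y}_{d,n}.
$$
This changes the existence step substantively: on the two--sided piece ${\rm Y}_{d,n}\,g_n\,{\rm Y}_{d,n}$ the four rules do not directly dictate a value, and one must set ${\rm tr}_{d,n+1}(x\,g_n\,y):=z\,{\rm tr}_{d,n}(xy)$ for $x,y\in {\rm Y}_{d,n}$ (equivalently, define the trace on the staircase words by deleting $g_n$), with the linear independence of the staircase words guaranteeing well--definedness---only after that can the verification of the trace property, which you rightly identify as the bulk of the work, begin. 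As written, your spanning claim fails, so your trace is not even defined on all of ${\rm Y}_{d,n+1}$; once the basis is repaired as above, the rest of your outline does go through along the lines of \cite{ju}.
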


The proof of Theorem~\ref{trace} rests on the fact that the algebra ${\rm Y}_{d,n+1}$ admits an inductive linear basis, where either $g_n$ or $t_{n+1}^m$ appears at most once.
Note that, for $d=1$ the trace restricts to the first three rules and it coincides with Ocneanu's trace on the Iwahori--Hecke algebra, which was used in \cite{jo} to construct the 2--variable or HOMFLYPT Jones polynomial for oriented classical knots and links.

\subsection{\it  The adelic Markov trace ${\rm tr}_{\infty}$}

Let $R$ be the polynomial ring ${\Bbb C}[z]$ and let $R[X_{d}]$ be the polynomial ring  with coefficients
 in $R$ and  variables $x_a$, where $a\in {\Bbb Z}/d{\Bbb Z}$. Let also $d\vert d^{\prime}$. The natural map
 $
 x_a\mapsto x_{b}
 $
 where $b:= \vartheta^{d^{\prime}}_d(a)$ (recall Eq.~\ref{varthetad}), defines a ring epimorphism:
\begin{equation}\label{xi}
\xi_{d}^{d^{\prime}} :R\left[X_{d^{\prime}} \right] \longrightarrow
R\left[X_{d}\right]
\end{equation}

We now have the following result (compare with Lemma 7 in \cite{jula2}).

\begin{lem}
The family $\left(R[X_d],\xi_d^{d^{\prime}}\right)$ indexed by ${\Bbb N}^{\sim}$,  is an inverse system.
\end{lem}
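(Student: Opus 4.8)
The plan is to verify the two defining conditions of an inverse system, since the index set ${\Bbb N}^{\sim}$ has already been noted to be directed (any two integers $d,d^{\prime}$ admit the common multiple $dd^{\prime}$, so upper bounds exist). Concretely, I must check that $\xi_d^d = {\rm id}$ for every $d$, and that $\xi_d^{d^{\prime}}\circ\xi_{d^{\prime}}^{d^{\prime\prime}} = \xi_d^{d^{\prime\prime}}$ whenever $d\mid d^{\prime}\mid d^{\prime\prime}$.

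The key observation is that each $\xi_d^{d^{\prime}}$ is, by construction, the $R$-algebra homomorphism induced on polynomial rings by the set map $a\mapsto\vartheta_d^{d^{\prime}}(a)$ on the indexing groups ${\Bbb Z}/d^{\prime}{\Bbb Z}\to{\Bbb Z}/d{\Bbb Z}$. By the universal property of the polynomial ring $R[X_{d^{\prime}}]$ over $R$, such a homomorphism is uniquely determined by its values on the generators $x_a$ and fixes $R={\Bbb C}[z]$. Thus it suffices to prove the two identities on generators, and this reduces the entire statement to the corresponding facts about the connecting epimorphisms $\vartheta_d^{d^{\prime}}$ that already underlie the definition of $\widehat{{\Bbb Z}}$.

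First I would record that the maps $\vartheta_d^{d^{\prime}}$ themselves form an inverse system: $\vartheta_d^d$ is reduction modulo $d$ applied to residues already taken modulo $d$, hence the identity; and for $d\mid d^{\prime}\mid d^{\prime\prime}$ the equality $\vartheta_d^{d^{\prime}}\circ\vartheta_{d^{\prime}}^{d^{\prime\prime}}=\vartheta_d^{d^{\prime\prime}}$ is simply the statement that reducing an integer modulo $d^{\prime\prime}$, then modulo $d^{\prime}$, then modulo $d$ gives the same result as reducing it modulo $d$ directly (using $d\mid d^{\prime}$). Granting this, the identity axiom is immediate: $\xi_d^d$ sends each generator $x_a$ to $x_{\vartheta_d^d(a)}=x_a$, so $\xi_d^d={\rm id}$. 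For transitivity, I would evaluate the composite on a generator $x_a$ with $a\in{\Bbb Z}/d^{\prime\prime}{\Bbb Z}$: one gets $\xi_d^{d^{\prime}}(x_{\vartheta_{d^{\prime}}^{d^{\prime\prime}}(a)})=x_{\vartheta_d^{d^{\prime}}(\vartheta_{d^{\prime}}^{d^{\prime\prime}}(a))}=x_{\vartheta_d^{d^{\prime\prime}}(a)}$, which is exactly $\xi_d^{d^{\prime\prime}}(x_a)$. Since the two $R$-algebra homomorphisms agree on all generators, they coincide.

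There is no genuine obstacle here; the statement is a formal verification. The only points requiring a little care are the well-definedness of each $\xi_d^{d^{\prime}}$---which holds because $a\mapsto\vartheta_d^{d^{\prime}}(a)$ is an honest function on residue classes, so $x_a\mapsto x_{\vartheta_d^{d^{\prime}}(a)}$ extends uniquely to an $R$-algebra map even though it need not be injective on the variables (whence these maps are only epimorphisms)---and the repeated use of the fact that everything is controlled on generators. I would present the argument by first isolating the compatibility of the $\vartheta_d^{d^{\prime}}$ as a short remark and then transporting it to the $\xi_d^{d^{\prime}}$ via the universal property.
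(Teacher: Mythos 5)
Your proof is correct and is exactly the intended argument: the paper itself states this lemma without proof (deferring to Lemma~7 of \cite{jula2}), and the expected verification is precisely your reduction, via the universal property of the polynomial ring over $R$, to the identity and transitivity of the connecting epimorphisms $\vartheta_d^{d^{\prime}}$, together with directedness of ${\Bbb N}^{\sim}$. Nothing is missing; your care about well-definedness on generators is the only point of substance, and you handle it properly.
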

We shall then consider the inverse limit:
$$
\varprojlim_{d\in {\Bbb N}^{\sim}}  R[X_d]
$$
 Notice that  $\varprojlim_{d\in {\Bbb N}^{\sim}}  R[X_d]$ can be regarded as the polynomial ring
over ${\Bbb C}$ in the variables $z$ and  $x_{\alpha}$, where $\alpha\in \widehat{\Bbb Z}$. The ring $\varprojlim_{d\in {\Bbb N}^{\sim}} R[X_d]$ turns out to be an integral domain.

Now, for all $n\in {\Bbb N}$ and for all $d, d^{\prime}, d^{\prime\prime}$ such that $d\vert d^{\prime}$ and $d^{\prime}\vert d^{\prime\prime}$, we have the following commutative diagram (compare with Lemma 6\cite{jula2}):
\begin{equation}\label{tra}
\begin{diagram}
\node{\cdots}
\node{{\rm Y}_{d,n}}
\arrow{w,t}{} \arrow{s,l}{{\rm tr}_{d,n}} \node{{\rm Y}_{d^{\prime},n}}\arrow{w,t}{\rho_d^{d^{\prime}}}
\arrow{s,l}{{\rm tr}_{d^{\prime},n}}
\node{{\rm Y}_{d^{\prime\prime},n}}
\arrow{w,t}{\rho_{d^{\prime}}^{d^{\prime\prime}}} \arrow{s,l}{{\rm tr}_{d^{\prime\prime},n}} \node{\cdots}\arrow{w,t}{}
\\
\node{\cdots}
\node{R\left[X_d\right] }
\arrow{w,t}{}
\node{R\left[X_{d^{\prime}}\right] }\arrow{w,t}{\xi_d^{d^{\prime}}}
\node{R\left[X_{d^{\prime\prime}}\right] }
\arrow{w,t}{\xi_{d^{\prime}}^{d^{\prime\prime}}}
\node{\cdots}\arrow{w,t}{}
\end{diagram}
\end{equation}

Finally, we note that there are natural inclusions ${\rm Y}_{\infty,n} \subset {\rm Y}_{\infty,n+1}$, for all  $n\in{\Bbb N}$. Let
$$
{\rm Y}_{\infty} := \varinjlim_{n\in {\Bbb N}} {\rm Y}_{\infty, n}
$$
the associated inductive limit. We then have the following.

\begin{thm}\label{ptrace}
There  exists a unique linear Markov trace ${\rm tr}_{\infty} =\left({\rm tr}_{\infty,n}\right)_{n\in{\Bbb N}}$,
$$
{\rm tr}_{\infty} :  {\rm Y}_{\infty } \longrightarrow  \varprojlim_{d\in {\Bbb N}^{\sim}}  R[X_d]
$$
such that
$$
\begin{array}{rcll}
{\rm tr}_{\infty,n}(ab) & = & {\rm tr}_{\infty,n}(ba) & \\
{\rm tr}_{\infty,n}(1) &  = & 1 & \\
{\rm tr}_{\infty,n+1}(ag_n) & = & z\,{\rm tr}_{\infty,n}(a)& \\
{\rm tr}_{\infty, n+1}(ay_{n+1}) & = & x_y {\rm  tr}_{\infty,n}(a) &
\end{array}
$$
where $a$, $b\in {\rm Y}_{\infty, n}$ and $y_{n+1}$ is the element in $\widehat{{\Bbb Z}}^{\,n+1}$ with $y \in \widehat{{\Bbb Z}}$ in the position $n+1$ and $0$ otherwise, that is: $y_{n+1} = (0,\ldots,0,y)$.
\end{thm}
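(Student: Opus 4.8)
The plan is to build ${\rm tr}_{\infty}$ as a two--step limit: an inverse limit over $d$ producing the trace at each fixed number of strands $n$, followed by an inductive limit over $n$. First I would fix $n$ and invoke the commutative diagram Eq.~\ref{tra}: the family $\left({\rm tr}_{d,n}\right)_{d\in{\Bbb N}^{\sim}}$ is a morphism of inverse systems from $\left({\rm Y}_{d,n},\rho_d^{d'}\right)$ to $\left(R[X_d],\xi_d^{d'}\right)$, so the universal property of the inverse limit yields a unique linear map
\[
{\rm tr}_{\infty,n} := \varprojlim_{d\in{\Bbb N}^{\sim}} {\rm tr}_{d,n} \, : \, {\rm Y}_{\infty,n} \longrightarrow \varprojlim_{d\in{\Bbb N}^{\sim}} R[X_d].
\]
On a coherent sequence $a=(a_d)_d\in{\rm Y}_{\infty,n}$ this reads ${\rm tr}_{\infty,n}(a)=\left({\rm tr}_{d,n}(a_d)\right)_d$, and the relation $\xi_d^{d'}\bigl({\rm tr}_{d',n}(a_{d'})\bigr)={\rm tr}_{d,n}\bigl(\rho_d^{d'}(a_{d'})\bigr)={\rm tr}_{d,n}(a_d)$ coming from Eq.~\ref{tra} shows the output sequence is again coherent, hence lies in $\varprojlim_d R[X_d]$. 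Linearity passes to the limit componentwise.

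Next I would check compatibility with the inclusions ${\rm Y}_{\infty,n}\subset{\rm Y}_{\infty,n+1}$. Because for each fixed $d$ the maps ${\rm tr}_{d,n}$ assemble into a single trace ${\rm tr}_d$ on ${\rm Y}_{d,\infty}$, one has ${\rm tr}_{d,n+1}|_{{\rm Y}_{d,n}}={\rm tr}_{d,n}$; reading this componentwise gives ${\rm tr}_{\infty,n+1}|_{{\rm Y}_{\infty,n}}={\rm tr}_{\infty,n}$. Thus $\left({\rm tr}_{\infty,n}\right)_n$ is compatible with the inductive system $\left({\rm Y}_{\infty,n}\right)_n$, and the universal property of the inductive limit produces the sought map ${\rm tr}_{\infty}:{\rm Y}_{\infty}\to\varprojlim_d R[X_d]$.

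I would then verify the four rules componentwise, each reducing to the corresponding rule of Theorem~\ref{trace}. The trace identity and ${\rm tr}_{\infty,n}(1)=1$ are immediate, since $1=(1)_d$ and $g_n=(g_n)_d$ are constant sequences; in particular ${\rm tr}_{\infty,n+1}(ag_n)=\bigl(z\,{\rm tr}_{d,n}(a_d)\bigr)_d=z\,{\rm tr}_{\infty,n}(a)$. The fourth rule is the one carrying the adelic content. Writing $y=(y_d)_d\in\widehat{{\Bbb Z}}$, the framing element $y_{n+1}$ is the coherent sequence $\bigl(t_{n+1}^{\,y_d}\bigr)_d$, and the coherence of $y$ together with $x_0=1$ (inherited from $t_{n+1}^0=1$) guarantees that $x_y:=\left(x_{y_d}\right)_d$ is a well--defined element of $\varprojlim_d R[X_d]$, namely one of the generating variables $x_\alpha$, $\alpha\in\widehat{{\Bbb Z}}$. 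Applying the fourth modular rule with $m=y_d$ at each level and multiplying componentwise gives ${\rm tr}_{\infty,n+1}(ay_{n+1})=x_y\,{\rm tr}_{\infty,n}(a)$.

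For uniqueness I would reduce to the modular case. Given any linear map $T=(T_n)_n$ satisfying the four adelic rules, I would compose with the canonical projection $\pi_d:\varprojlim_{d'} R[X_{d'}]\to R[X_d]$ and aim to show that $\pi_d\circ T_n$ factors through the limit projection $p_d:{\rm Y}_{\infty,n}\to{\rm Y}_{d,n}$; the resulting functionals on ${\rm Y}_{d,n}$ would then obey the modular rules and hence coincide with ${\rm tr}_{d,n}$ by the uniqueness in Theorem~\ref{trace}. Since the $\pi_d$ jointly separate points of $\varprojlim_d R[X_d]$, this forces $T={\rm tr}_{\infty}$. I expect the genuine obstacle to lie exactly in this factorization: one must control $\ker p_d$ and show that the adelic rules pin $T$ down on an inductive spanning set of ${\rm Y}_{\infty,n+1}$ over ${\rm Y}_{\infty,n}$, the adelic analogue of Juyumaya's inductive basis of ${\rm Y}_{d,n+1}$ in which $g_n$ or $t_{n+1}^m$ occurs at most once, compatibly with the inverse--limit structure. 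These are precisely the technical points treated in the $p$--adic setting in \cite{jula2}, to which the present argument is entirely parallel.
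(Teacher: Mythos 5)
Your proposal is correct and follows essentially the same route as the paper: the paper's own proof is the one--line observation that the theorem ``follows immediately from the commutative diagram Eq.~\ref{tra} and from the existence and uniqueness of the traces ${\rm tr}_d$,'' and your argument simply fills in the details of that plan --- the inverse limit over $d$ via the universal property, the inductive limit over $n$, componentwise verification of the four rules, and uniqueness by projection to the modular level. The technical point you flag at the end (factoring an arbitrary $T$ through the projections $p_d$) is likewise deferred by the paper itself to the parallel treatment in \cite{jula2}, so your treatment matches the paper's in both substance and level of rigor.
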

\begin{proof} It follows immediately from the commutative diagram Eq.~\ref{tra} and from the existence and uniqueness of the traces ${\rm tr}_{d}$.
\end{proof}

\section{The $E$--condition }\label{econdition}

\subsection{}

The representations Eqs.~\ref{repbn} and~\ref{adelicrepbn} of the braid group through the classical and the adelic Yokonuma--Hecke algebras composed with the Markov traces ${\rm tr}_d$ and ${\rm tr}_{\infty}$ of Theorems \ref{trace} and \ref{ptrace} map classical braids to complex polynomials. In view of the Alexander and Markov theorems for classical braids we would like to construct isotopy invariants for classical oriented knots and links. According to the Markov Theorem, such an invariant has to agree on the links $\widehat{\alpha}$, $\widehat{\alpha\sigma_n}$ and $\widehat{\alpha\sigma_n^{- 1}}$, for any $\alpha \in B_n$. Following Jones' construction of the 2--variable Jones polynomial for classical knots \cite{jo}, we will try to define knot isotopy invariants by re--scaling and normalizing the traces ${\rm tr}_{d}$ and the adelic trace ${\rm tr}_{\infty}$. By the equation:
\begin{equation}\label{invrs}
g_i^{-1} = g_i - (u^{-1} - 1)\, e_{d,i} + (u^{-1} - 1)\, e_{d,i} \, g_i
\end{equation}
 we have:
\begin{equation}\label{trinvrs}
{\rm tr}_{d}(\alpha g_n^{-1}) = {\rm tr}_{d}(\alpha g_n) - (u^{-1}-1){\rm tr}_{d}(\alpha e_{d,n})
 + (u^{-1}-1){\rm tr}_{d}(\alpha e_{d,n} g_n).
\end{equation}
In order that the invariant agrees on the closures of the braids $\alpha{\sigma_n}^{-1}$ and $\alpha{\sigma_n}$ we need that ${\rm tr}_{d}(\alpha g_n^{-1})$ factorizes through ${\rm tr}_{d} (\alpha)$, just as ${\rm tr}_{d}(\alpha g_n)$ does.  Indeed, for the first term we have: ${\rm tr}_{d}(\alpha g_n) = z\, {\rm tr}_{d}(\alpha)$. Further:

\begin{equation}\label{egalpha}
{\rm tr}_{d} (\alpha e_{d,n}g_n)=\frac{1}{d}\sum_{m=0}^{d-1}{\rm tr}_{d}(\alpha t_n^mt_{n+1}^{-m} g_n )=\frac{1}{d}\sum_{m=0}^{d-1}z\, {\rm tr}_{d}(\alpha) = z\,{\rm tr}_{d}(\alpha)
\end{equation}
since  ${\rm tr}_{d}(\alpha t_n^mt_{n+1}^{-m} g_n)= {\rm tr}_{d}( \alpha t_n^mg_n t_{n}^{-m})= z\,{\rm tr}_{d}( \alpha t_n^mt_n^{-m}) = z\,{\rm tr}_{d}(\alpha)$. So, we need  that ${\rm tr}_{d}(\alpha e_{d,n})$ also factorizes through ${\rm tr}_{d} (\alpha)$. Unfortunately, we do not have, in general, such a  nice formula for ${\rm tr}_{d}(\alpha e_{d,n})$. The underlying reason on the framed braid level (which is the natural interpretation for elements in ${\rm Y}_{d,n}$) is that $e_{d,n}$ involves the $n$th strand of the braid $\alpha$. Yet, by imposing some conditions on the indeterminates $x_i$ of the trace ${\rm tr}_{d}$ it is possible to have this factorization.

\subsection{\it The $E$--system}

Set  $X_d=\{ x_0,  x_1, \ldots ,x_{d-1}\}$ a set of $d$ complex numbers. We shall say that $X_d$  satisfies the {\it $E$--condition} if the $x_i$'s  are solutions of the following non--linear system of $d-1$ equations:
\begin{equation}\label{Esystem}
\begin{array}{ccc}
{\rm E}_d^{(1)} & = &  {\rm x}_1 {\rm E}_d^{(0)}\\
{\rm E}_d^{(2)} & = &  {\rm x}_2 {\rm E}_d^{(0)}\\
& \vdots & \\
{\rm E}_d^{(d-1)} & = &  {\rm x}_{d-1}{\rm E}_d^{(0)}
\end{array}
\end{equation}
where ${\rm E}_{d}^{(m)}$ is the   polynomial in variables ${\rm x}_1,\ldots ,{\rm x}_{d-1}$ defined as:
\begin{equation}\label{Epoly}
{\rm E}_{d}^{(m)} =\sum_{s=0}^{d-1}{\rm x}_{m+s}{\rm x}_{d-s}
\end{equation}
where,  by definition, ${\rm x}_0 = {\rm x}_d = 1$, and the sub--indices are regarded modulo $d$.
 We shall refer to the system above as the {\it $(E,d)$--system} or simply the {\it $E$--system}.
 For example, in the case $d=3$ we have the $E$--system:
$$
\begin{array}{lcr}
{\rm x}_1 + {\rm x}_2^2 & = & 2 {\rm x}_1^2 {\rm x}_2 \\
{\rm x}_1^2 + {\rm x}_2 & = & 2 {\rm x}_1 {\rm x}_2^2
\end{array}
$$

We then have the following result (compare with Theorem~6 in \cite{jula2}).

\begin{thm}\label{thme}
If $X_{d, S}$ is a solution of the $E$--system then for all $\alpha \in {\rm Y}_{d,n}$ we have:
$$
{\rm tr}_d(\alpha e_{d,n}) = {\rm tr}_d(\alpha)\, {\rm tr}_d( e_{d,n}).
$$
\end{thm}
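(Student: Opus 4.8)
The plan is to argue by induction on $n$, after reducing the statement to the evaluation of the numbers ${\rm tr}_{d,n}(\alpha\, t_n^m)$. First I would record the key reduction: for any $\alpha\in {\rm Y}_{d,n}$, using $\alpha t_n^m\in {\rm Y}_{d,n}$, the identity $t_{n+1}^{-m}=t_{n+1}^{d-m}$, the fourth rule of Theorem~\ref{trace}, and the convention $x_0=x_d=1$,
\[
{\rm tr}_d(\alpha\, e_{d,n})=\frac{1}{d}\sum_{m=0}^{d-1}{\rm tr}_d\big(\alpha\, t_n^m t_{n+1}^{-m}\big)=\frac{1}{d}\sum_{m=0}^{d-1}x_{d-m}\,{\rm tr}_{d,n}(\alpha\, t_n^m).
\]
Specialising to $\alpha=1$ and using ${\rm tr}_{d,n}(t_n^m)=x_m$ gives the $n$--independent value ${\rm tr}_d(e_{d,n})=\frac{1}{d}\sum_{m}x_{d-m}x_m=\frac{1}{d}{\rm E}_d^{(0)}$. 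Hence the assertion is equivalent to $\frac{1}{d}\sum_m x_{d-m}\,{\rm tr}_{d,n}(\alpha\, t_n^m)=\frac{1}{d}{\rm E}_d^{(0)}\,{\rm tr}_d(\alpha)$, and by linearity it suffices to check it on a spanning set of ${\rm Y}_{d,n}$.

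Here I would invoke the inductive linear basis underlying Theorem~\ref{trace}, in which $g_{n-1}$ occurs at most once; since $t_n$ commutes with every element of ${\rm Y}_{d,n-1}$ and $g_{n-1}t_n^m=t_{n-1}^m g_{n-1}$, every spanning element can be taken of one of two forms. Type A is $\alpha=w\,t_n^a$ with $w\in {\rm Y}_{d,n-1}$ and $0\le a\le d-1$: then $\alpha t_n^m=w\,t_n^{a+m}$, so ${\rm tr}_{d,n}(\alpha t_n^m)=x_{a+m}\,{\rm tr}_{d,n-1}(w)$ (indices mod $d$), and the left-hand side becomes $\frac{1}{d}\big(\sum_m x_{d-m}x_{a+m}\big){\rm tr}_{d,n-1}(w)=\frac{1}{d}{\rm E}_d^{(a)}\,{\rm tr}_{d,n-1}(w)$, while the right-hand side is $\frac{1}{d}{\rm E}_d^{(0)}x_a\,{\rm tr}_{d,n-1}(w)$. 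These coincide exactly because ${\rm E}_d^{(a)}=x_a{\rm E}_d^{(0)}$, which is the $E$--system (and is trivially true for $a=0$). This is the only place the hypothesis enters, and it also disposes of the base case $n=1$, where ${\rm Y}_{d,1}$ is spanned by the powers $t_1^a$ and only Type A occurs.

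Type B is $\alpha=w_1\,g_{n-1}\,w_2$ with $w_1,w_2\in {\rm Y}_{d,n-1}$. Commuting $t_n^m$ past $w_2$, rewriting $g_{n-1}t_n^m=t_{n-1}^m g_{n-1}$, and then using cyclicity to bring $w_2$ to the front, I get ${\rm tr}_{d,n}(\alpha t_n^m)={\rm tr}_{d,n}(\delta\, t_{n-1}^m g_{n-1})=z\,{\rm tr}_{d,n-1}(\delta\, t_{n-1}^m)$ with $\delta:=w_2w_1\in {\rm Y}_{d,n-1}$, by the third trace rule. Summing and comparing with the one-level-down version of the opening reduction yields $\frac{1}{d}\sum_m x_{d-m}\,{\rm tr}_{d,n}(\alpha t_n^m)=z\,{\rm tr}_d(\delta\, e_{d,n-1})$, which by the induction hypothesis equals $z\,{\rm tr}_{d,n-1}(\delta)\cdot\frac{1}{d}{\rm E}_d^{(0)}$; since ${\rm tr}_d(\alpha)={\rm tr}_{d,n}(\delta\, g_{n-1})=z\,{\rm tr}_{d,n-1}(\delta)$, this is precisely ${\rm tr}_d(\alpha)\,{\rm tr}_d(e_{d,n})$, closing the induction. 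The main obstacle I anticipate is not any single computation but the careful justification of the reduction to Types A and B — that is, the inductive basis of ${\rm Y}_{d,n}$ over ${\rm Y}_{d,n-1}$ with $g_{n-1}$ appearing at most once and all powers of $t_n$ absorbed — together with the bookkeeping of indices modulo $d$ that turns the twofold application of the trace rules into exactly the polynomials ${\rm E}_d^{(a)}$.
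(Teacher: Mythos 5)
Your proof is correct and takes essentially the same approach as the paper, which establishes Theorem~\ref{thme} precisely by ``considering all different cases for $\alpha$ being an element in the inductive basis of ${\rm Y}_{d,n}(u)$'' (deferring details to \cite{jula2}): your Type~A/Type~B split is exactly that inductive-basis case analysis, with the $E$--system entering only in the $w\,t_n^a$ case and induction on $n$ disposing of the $w_1 g_{n-1} w_2$ case. In effect you have supplied the deferred details, and they check out.
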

For the proof of Theorem~\ref{thme} we need to consider all different cases for $\alpha$ being an element in the inductive basis of ${\rm Y}_{d,n}(u)$. See \cite{jula2} for details.

\smallbreak
We still need to establish, of course, that the set of solutions of the $E$--system is non--empty. For $a\in {\Bbb Z}/d{\Bbb Z}$ we denote  ${\rm exp}_a$ the exponential character of the group ${\Bbb Z}/d{\Bbb Z}$, that is:
$$
{\rm exp}_a (k): = \cos \frac{2\pi  a k}{d} + i \sin \frac{2\pi  a k}{d}\qquad (k\in {\Bbb Z}/d{\Bbb Z}).
$$
\begin{thm}[G\'{e}rardin, 2009]
The solutions of system Eq.~\ref{Esystem} are parametrized by the non--empty subsets $S$ of ${\Bbb Z}/d{\Bbb Z}$. More precisely, a subset $S$ defines the solution $X_{d, S}=\{x_0, x_1, \ldots ,x_{d-1}\}$,  where:
$$
x_k = \frac{1}{\vert S\vert}\sum_{s\in S}{\rm exp}_s(k)\qquad (0\leq k\leq d-1).
$$
\end{thm}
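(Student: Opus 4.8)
The plan is to reformulate the $E$--system as a single self--convolution identity on the cyclic group ${\Bbb Z}/d{\Bbb Z}$ and then diagonalise it by the discrete Fourier transform, the characters ${\rm exp}_a$ being exactly the tool that appears in the statement. First I would regard a candidate solution as a function $x:{\Bbb Z}/d{\Bbb Z}\to {\Bbb C}$, $k\mapsto x_k$, subject to the normalization $x_0=x_d=1$. Reindexing the sum in Eq.~\ref{Epoly} by $s\mapsto d-s$ identifies the polynomial as a convolution, namely $E_d^{(m)}=\sum_{j=0}^{d-1}x_{m-j}\,x_j=(x*x)(m)$ with indices read modulo $d$. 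Writing $\lambda:=E_d^{(0)}=(x*x)(0)$, the $E$--system Eq.~\ref{Esystem} together with $x_0=1$ becomes precisely the operator identity $x*x=\lambda\,x$ holding for \emph{all} $m\in{\Bbb Z}/d{\Bbb Z}$: the equations for $m=1,\dots,d-1$ are the listed ones, while the $m=0$ equation reads $\lambda=x_0\lambda$ and is automatic. Equivalently, for $\lambda\neq 0$ the element $\lambda^{-1}x$ is an idempotent of the group algebra ${\Bbb C}[{\Bbb Z}/d{\Bbb Z}]$ under convolution, which is the structural reason subsets will index the solutions.

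Next I would apply the discrete Fourier transform $\widehat{x}(a)=\sum_{k=0}^{d-1}x_k\,{\rm exp}_{-a}(k)$, which turns convolution into pointwise multiplication. The identity $x*x=\lambda\,x$ then becomes $\widehat{x}(a)^2=\lambda\,\widehat{x}(a)$ for every $a$, so that $\widehat{x}(a)\in\{0,\lambda\}$ for each $a$. Since $x_0=1$ forces $x\not\equiv 0$, the transform $\widehat{x}$ is not identically zero; hence $\lambda\neq 0$ and the \emph{Fourier support} $S:=\{a:\widehat{x}(a)\neq 0\}$ is a non--empty subset of ${\Bbb Z}/d{\Bbb Z}$ with $\widehat{x}=\lambda\,\mathbf{1}_S$.

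Finally I would invert. Fourier inversion gives $x_k=\frac{\lambda}{d}\sum_{s\in S}{\rm exp}_s(k)$, and imposing $x_0=1$ yields $\lambda\,|S|/d=1$, i.e. $\lambda=d/|S|$, which produces exactly the asserted formula $x_k=\frac{1}{|S|}\sum_{s\in S}{\rm exp}_s(k)$. For the converse I would start from an arbitrary non--empty $S$, define $x$ by this formula, and compute directly that $\widehat{x}=(d/|S|)\,\mathbf{1}_S$, whence $x*x=(d/|S|)\,x$ and $x_0=1$; thus $X_{d,S}$ is a solution. Bijectivity is then immediate, since $S$ is recovered from any solution as the support of its Fourier transform, so distinct subsets yield distinct tuples.

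The harmonic analysis itself is routine; the step I expect to require the most care is establishing the \emph{exact} equivalence between the $d-1$ scalar relations of Eq.~\ref{Esystem} and the single identity $x*x=\lambda x$ on all of ${\Bbb Z}/d{\Bbb Z}$. The delicate points there are that the convention $x_0=x_d=1$ is exactly what absorbs the $m=0$ relation and later what pins down the scalar $\lambda=d/|S|$, and that one must rule out $\lambda=0$ before declaring $\widehat{x}=\lambda\mathbf{1}_S$. Once this bookkeeping is in place, the equivalence $\widehat{x}\in\{0,\lambda\}$, the inversion, and the bijection $S\mapsto X_{d,S}$ all follow mechanically.
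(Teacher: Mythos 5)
Your proof is correct and follows essentially the same route as the paper's: the paper itself defers the proof to G\'{e}rardin's Appendix in \cite{jula2}, where the $E$--system is likewise recast as the statement that $x$ is, up to the scalar $E_d^{(0)}$, an idempotent of the convolution algebra of complex functions on ${\Bbb Z}/d{\Bbb Z}$, and is then diagonalised by the characters ${\rm exp}_a$, so that solutions correspond exactly to Fourier supports $S$. Your bookkeeping --- absorbing the $m=0$ equation via $x_0=1$, ruling out $E_d^{(0)}=0$ before writing $\widehat{x}=E_d^{(0)}\mathbf{1}_S$, and pinning down $E_d^{(0)}=d/\vert S\vert$ --- is sound, so nothing is missing.
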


\begin{proof}
See Appendix in \cite{jula2}.
\end{proof}

Let $X_{d, S}$ be a solution of the $E$--system. A direct computation yields that the value of the ${\rm tr}_d$ on $e_{d,i}$ (with respect to $X_{d, S}$) is:
\begin{equation}\label{valedi}
{\rm tr}_d\left( e_{d,i}\right) = \frac{ 1}{\vert S\vert}\qquad (1\leq i\leq n-1).
\end{equation}

For a thorough discussion and full proofs related to the $E$--condition and the $E$--system we refer the reader to \cite{jula2}.

\subsection{\it }

For  $d\vert d^{\prime}$ we denote ${\rm s}_{d^{\prime}}^d$ a section map of the natural epimorphism $\vartheta_d^{d^{\prime}}$ of Eq.~\ref{varthetad}.
  By taking a section  ${\rm s}_{d^{\prime}}^d$ any solution of the $(E, d)$--system can be lifted trivially to a  solution of the  $(E, d^{\prime})$--system. Indeed: If $X_{d, S}$ is a solution of the $(E, d)$--system, then  $X_{d^{\prime}, S^{\prime}}$ is a solution of the $(E, d^{\prime})$--system, where $S^{\prime} :={\rm s}_{d^{\prime}}^d(S) $. A more interesting lifting  can be constructed as follows. Define
  $S^d_{d^{\prime}} =\{{\rm s}^d_{d^{\prime}}(a) + b\,;a\in S, \, b\in\ker\vartheta^{d^{\prime}}_d\}$. Then we define the lifting  $X_{d, d^{\prime}, S}$ of $X_{d, S}$ as:
\begin{equation}\label{lif}
  X_{d, d^{\prime}, S} : = X_{d^{\prime}, S^d_{d^{\prime}}}\qquad (S\subseteq {\Bbb Z}/d{\Bbb Z})
\end{equation}
Notice that $\vert X_{d, d^{\prime}, S} \vert = \vert S\vert d^{\prime}/d $  and $X_{d,d, S} = X_{d, S} $.

\begin{lem}\label{cohlif}
For $d\vert d^{\prime}\vert d^{\prime\prime}$ and $S$ non--empty subset of ${\Bbb Z}/d{\Bbb Z}$ we have:
$$
X_{d, d^{\prime\prime}, S} = X_{ d^{\prime}, d^{\prime\prime}, S^{\prime} }
$$
where $S^{\prime}:=S^d_{d^{\prime}}$.
\end{lem}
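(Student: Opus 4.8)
The plan is to reduce the asserted equality of $(E,d^{\prime\prime})$--solutions to an equality of the subsets of ${\Bbb Z}/d^{\prime\prime}{\Bbb Z}$ that parametrize them. Unfolding definition Eq.~\ref{lif} twice, the left--hand side is $X_{d,d^{\prime\prime},S}=X_{d^{\prime\prime},\,S^d_{d^{\prime\prime}}}$ and the right--hand side is $X_{d^{\prime},d^{\prime\prime},S^{\prime}}=X_{d^{\prime\prime},\,(S^{\prime})^{d^{\prime}}_{d^{\prime\prime}}}$, with $S^{\prime}=S^d_{d^{\prime}}$. Since by G\'erardin's theorem a solution $X_{d^{\prime\prime},T}$ is completely determined, through the formula $x_k=\frac{1}{|T|}\sum_{s\in T}{\rm exp}_s(k)$, by the subset $T\subseteq{\Bbb Z}/d^{\prime\prime}{\Bbb Z}$, it suffices to prove the set--theoretic identity
$$
S^d_{d^{\prime\prime}} \;=\; \bigl(S^d_{d^{\prime}}\bigr)^{d^{\prime}}_{d^{\prime\prime}}
$$
inside ${\Bbb Z}/d^{\prime\prime}{\Bbb Z}$.

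The key observation is that the lifting $S^d_{d^{\prime}}$ of Eq.~\ref{lif} is nothing other than the full preimage $(\vartheta^{d^{\prime}}_d)^{-1}(S)$, and in particular does not depend on the chosen section ${\rm s}^d_{d^{\prime}}$. Indeed, every element ${\rm s}^d_{d^{\prime}}(a)+b$ with $a\in S$ and $b\in\ker\vartheta^{d^{\prime}}_d$ reduces modulo $d$ to $\vartheta^{d^{\prime}}_d({\rm s}^d_{d^{\prime}}(a))=a\in S$, so $S^d_{d^{\prime}}\subseteq(\vartheta^{d^{\prime}}_d)^{-1}(S)$; conversely, for fixed $a\in S$ the translates ${\rm s}^d_{d^{\prime}}(a)+b$, $b\in\ker\vartheta^{d^{\prime}}_d$, sweep out the entire fibre $(\vartheta^{d^{\prime}}_d)^{-1}(a)$, whence taking the union over $a\in S$ gives the reverse inclusion. (This also accounts for the cardinality $|S^d_{d^{\prime}}|=|S|\,d^{\prime}/d$ recorded after Eq.~\ref{lif}.)

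With this identification the lemma becomes the functoriality of preimages. Reduction modulo $d$ factors through reduction modulo $d^{\prime}$, that is $\vartheta^{d^{\prime\prime}}_d=\vartheta^{d^{\prime}}_d\circ\vartheta^{d^{\prime\prime}}_{d^{\prime}}$, and therefore
$$
\bigl(\vartheta^{d^{\prime\prime}}_d\bigr)^{-1}(S)=\bigl(\vartheta^{d^{\prime\prime}}_{d^{\prime}}\bigr)^{-1}\Bigl(\bigl(\vartheta^{d^{\prime}}_d\bigr)^{-1}(S)\Bigr).
$$
Applying the previous paragraph three times gives $S^d_{d^{\prime\prime}}=(\vartheta^{d^{\prime\prime}}_d)^{-1}(S)$, $S^{\prime}=S^d_{d^{\prime}}=(\vartheta^{d^{\prime}}_d)^{-1}(S)$ and $(S^{\prime})^{d^{\prime}}_{d^{\prime\prime}}=(\vartheta^{d^{\prime\prime}}_{d^{\prime}})^{-1}(S^{\prime})$; chaining these with the displayed identity yields $S^d_{d^{\prime\prime}}=(S^{\prime})^{d^{\prime}}_{d^{\prime\prime}}$, which is what we wanted.

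I expect the one genuinely delicate point to be section--independence. The definition Eq.~\ref{lif} is phrased through an arbitrary section ${\rm s}$, and the two sides of the lemma are built from a priori different sections (the left side from ${\rm s}^d_{d^{\prime\prime}}$, the right side from ${\rm s}^d_{d^{\prime}}$ and ${\rm s}^{d^{\prime}}_{d^{\prime\prime}}$); so the statement could not even be true on the nose unless these choices are irrelevant. The crux is therefore precisely the observation of the second paragraph, that saturating the section image by the whole kernel collapses the lifting to the section--free full preimage. Once that is in hand the remainder is the purely formal composition of preimages, with no computation required.
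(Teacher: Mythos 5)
Your proof is correct, and it shares its first step with the paper's --- both reduce, via G\'erardin's parametrization, to the set--theoretic identity $\left(S^d_{d^{\prime}}\right)^{d^{\prime}}_{d^{\prime\prime}}=S^d_{d^{\prime\prime}}$ inside ${\Bbb Z}/d^{\prime\prime}{\Bbb Z}$ --- but the two verifications of that identity genuinely differ. The paper element--chases through the sections: it writes a general element of the left--hand side as $z={\rm s}^{d^{\prime}}_{d^{\prime\prime}}\left({\rm s}^d_{d^{\prime}}(\mu)+\nu\right)+y$ and expands this to ${\rm s}^d_{d^{\prime\prime}}(\mu)+{\rm s}^{d^{\prime}}_{d^{\prime\prime}}(\nu)+y$, i.e.\ it treats the section as additive and as satisfying ${\rm s}^{d^{\prime}}_{d^{\prime\prime}}\circ {\rm s}^d_{d^{\prime}}={\rm s}^d_{d^{\prime\prime}}$. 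For arbitrary sections neither identity is exact; both hold only modulo $\ker\vartheta^{d^{\prime\prime}}_d$, which does suffice because $S^d_{d^{\prime\prime}}$ is stable under translation by that kernel, but the paper leaves this tacit (and, as written, its chase establishes only the inclusion $\left(S^d_{d^{\prime}}\right)^{d^{\prime}}_{d^{\prime\prime}}\subseteq S^d_{d^{\prime\prime}}$, equality then needing a cardinality count or a symmetric argument). Your route --- first proving that $S^d_{d^{\prime}}$ is exactly the full preimage $(\vartheta^{d^{\prime}}_d)^{-1}(S)$, hence independent of the chosen section, and then composing preimages via $\vartheta^{d^{\prime\prime}}_d=\vartheta^{d^{\prime}}_d\circ\vartheta^{d^{\prime\prime}}_{d^{\prime}}$ --- buys exactly what the paper's argument glosses over: section--independence is isolated as an explicit lemma rather than used implicitly, both inclusions come at once, and the cardinality claim $\vert S^d_{d^{\prime}}\vert=\vert S\vert d^{\prime}/d$ made after the definition of the lifting falls out for free. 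The paper's argument is shorter on the page but, read literally, requires the ``congruence modulo the kernel'' caveat that your preimage description renders unnecessary; you correctly identified section--independence as the crux of the whole statement.
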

\begin{proof}
According to the definition of $X_{S, d}$ it is enough to prove that:
$$
 \left(S^d_{d^{\prime}}\right)^{d^{\prime}}_{d^{\prime\prime}}= S^d_{d^{\prime}}
$$
Now the elements in $\left(S^d_{d^{\prime}}\right)^{d^{\prime}}_{d^{\prime\prime}}$ are in the form
$z:={\rm s}^{d^{\prime}}_{d^{\prime\prime}}(x) + y$, where $x\in S^d_{d^{\prime}}$ and $y\in \ker \vartheta^{d^{\prime\prime}}_{d^{\prime}}$. The element $x$ is in the form $x ={\rm s}^d_{d^{\prime}}(\mu) + \nu $, where $\mu\in S$ and $\nu\in \ker \vartheta^{d^{\prime}}_d$.  So we can re--write $z$ as:
$$
z = {\rm s}^{d^{\prime}}_{d^{\prime\prime}}\left( {\rm s}^d_{d^{\prime}}(\mu) + \nu \right) + y = {\rm s}^{d^{\prime}}_{d^{\prime\prime}}\left( {\rm s}^d_{d^{\prime}}(\mu) \right)+ {\rm s}^{d^{\prime}}_{d^{\prime\prime}}\left(\nu\right) + y = {\rm s}^d_{d^{\prime\prime}}\left(\mu\right)+ {\rm s}^{d^{\prime}}_{d^{\prime\prime}}\left(\nu\right) + y
$$
But ${\rm s}^{d^{\prime}}_{d^{\prime\prime}}\left(\nu\right) + y$ belongs to the $\ker\vartheta^{d^{\prime\prime}}_d$; hence $z\in S^d_{d^{\prime}}$. Thus $\left(S^d_{d^{\prime}}\right)^{d^{\prime}}_{d^{\prime\prime}}= S^d_{d^{\prime}}$.
\end{proof}
We showed that solutions of the $E$--system lift to solutions on the adelic level.

\section{An adelic extension of the Jones polynomial}\label{adelicjones}

\subsection{\it Isotopy invariants from ${\rm tr}_d$}

Given a solution $X_{d, S}$ of the $E$--system, Eq. \ref{trinvrs} can be rewritten as follows, using Theorem~\ref{thme}:
\begin{equation}\label{Etrinvrs}
{\rm tr}_{d}(\alpha g_n^{-1}) =  \frac{z +(u-1)\zeta_{d,S}}{u}\, {\rm tr}_{d}(\alpha)
\end{equation}
where, for all $i$:
$$
\zeta_{d,S} := {\rm tr}_d(e_{d,i}) = \frac{1}{\vert S\vert }.
$$

 Let now ${\mathcal L}$ be the set of oriented links in $S^3$. Recall that by the Alexander theorem every link type may be represented by a closed braid. For the solution $X_{d, S}$ of the $E$--system we wish to define a link isotopy invariant $\Delta_{d, S}$. In order that $\Delta_{d, S}(\widehat{\alpha\sigma_n}) = \Delta_{d, S}(\widehat{\alpha\sigma_n^{- 1}})$, for $\alpha \in B_n$, we apply a re--scaling via the homomorphism:
\begin{equation}\label{delta}
\begin{array}{cccl}
\delta : & B_n & \longrightarrow & {\rm Y}_{d,n} \\
 & \sigma_i  & \mapsto & \sqrt{\lambda}g_i
\end{array}
\end{equation}
where:
$$
 \lambda:= \frac{z -(1-u)\zeta_{d,S}}{uz}
$$
Finally, in order that $\Delta_{d, S}(\widehat{\alpha\sigma_n}) = \Delta_{d, S}(\widehat{\alpha})$ we need to do a normalization. So, we define the following map on the set ${\mathcal L}$.

\begin{defn}\rm
Let $\alpha\in B_n$, any $n$. We define the map $\Delta_{d, S}$ on the closure $\widehat{\alpha}$ of $\alpha$ as follows:
$$
\Delta_{d, S} (\widehat{\alpha}) := \left(\frac{1-\lambda u}{\sqrt{\lambda}(1-u)\zeta_{d,S}}\right)^{n-1}
\left({\rm tr}_d \circ \delta\right)(\alpha)
$$
Equivalently, setting
$$
D:= \frac{1-\lambda u}{\sqrt{\lambda}(1-u)\zeta_{d,S}}
$$
 we can write:
$$
\Delta_{d, S} (\widehat{\alpha}) = D^{n-1}(\sqrt{\lambda})^{\epsilon(\alpha)}{\rm tr}_d(
\flat_{d,n}(\alpha))
$$
where $\epsilon(\alpha)$ is the algebraic sum of the exponents of the $\sigma_i$'s in the braid word $\alpha$ and where $\flat_{d,n}$ was defined in Eq.~\ref{repbn}.
\end{defn}

\begin{thm}\label{invariant}
For any solution $X_{d, S}$ of the $E$--system, $\Delta_{d, S}$ is a 2--variable isotopy invariant  of oriented links in $S^3$, depending on the variables $u,z$.
\end{thm}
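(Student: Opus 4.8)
The plan is to invoke the Markov theorem: since every oriented link is a closed braid, and two braids have isotopic closures precisely when they are related by a finite sequence of conjugations and (positive or negative) Markov stabilizations, it suffices to check that the scalar $\Delta_{d,S}(\widehat\alpha)$ is, first, well defined as a function of $\alpha\in B_n$, and, second, invariant under these two moves. For well-definedness I would note that $\flat_{d,n}$ is a group homomorphism, so $\flat_{d,n}(\alpha)$ depends only on the braid $\alpha$ and not on the chosen word, while the exponent sum $\epsilon$ factors through the abelianization $B_n\to{\Bbb Z}$ and is therefore also word-independent; hence the defining formula is unambiguous. One should also record that the rescaling map $\delta$ of Eq.~\ref{delta} really is a homomorphism, since multiplying every generator by the common scalar $\sqrt\lambda$ preserves the braid relations, and that consequently $\delta(\alpha)=(\sqrt\lambda)^{\epsilon(\alpha)}\flat_{d,n}(\alpha)$, which justifies the second form of the definition.

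Invariance under conjugation is immediate: by the trace property ${\rm tr}_d(ab)={\rm tr}_d(ba)$ together with the multiplicativity of $\flat_{d,n}$ one gets ${\rm tr}_d(\flat_{d,n}(\alpha\beta))={\rm tr}_d(\flat_{d,n}(\beta\alpha))$, while $\epsilon(\alpha\beta)=\epsilon(\beta\alpha)$ and the prefactor $D^{n-1}$ is unchanged since both words lie in $B_n$.

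For the positive stabilization $\alpha\mapsto\alpha\sigma_n$ I would use $\flat_{d,n+1}(\alpha\sigma_n)=\flat_{d,n}(\alpha)g_n$ and the third trace rule ${\rm tr}_d(ag_n)=z\,{\rm tr}_d(a)$, together with $\epsilon(\alpha\sigma_n)=\epsilon(\alpha)+1$ and the increase of the $D$-exponent from $n-1$ to $n$. This yields $\Delta_{d,S}(\widehat{\alpha\sigma_n})=D\sqrt\lambda\,z\,\Delta_{d,S}(\widehat\alpha)$, so the claim reduces to the identity $D\sqrt\lambda z=1$. A short computation shows $1-\lambda u=(1-u)\zeta_{d,S}/z$, whence $D=1/(\sqrt\lambda z)$ and the identity follows. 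For the negative stabilization $\alpha\mapsto\alpha\sigma_n^{-1}$ I would instead feed $\flat_{d,n}(\alpha)g_n^{-1}$ into the rescaled formula using Eq.~\ref{Etrinvrs}, namely ${\rm tr}_d(\alpha g_n^{-1})=\frac{z+(u-1)\zeta_{d,S}}{u}{\rm tr}_d(\alpha)$, and $\epsilon(\alpha\sigma_n^{-1})=\epsilon(\alpha)-1$; this produces the factor $\frac{D}{\sqrt\lambda}\cdot\frac{z+(u-1)\zeta_{d,S}}{u}$, which equals $1$ because $z-(1-u)\zeta_{d,S}=\lambda uz$ makes the second factor equal to $\lambda z$ while $D/\sqrt\lambda=1/(\lambda z)$. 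The constants $\lambda$ and $D$ have in fact been reverse-engineered precisely so that both stabilization factors collapse to $1$ simultaneously.

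The step I expect to carry the real weight is the negative stabilization, because it is the only place where the solution $X_{d,S}$ of the $E$-system is indispensable: Eq.~\ref{Etrinvrs} holds only by virtue of Theorem~\ref{thme}, i.e.\ the factorization ${\rm tr}_d(\alpha e_{d,n})={\rm tr}_d(\alpha)\,{\rm tr}_d(e_{d,n})$, which fails without the $E$-condition. By contrast the positive move and conjugation use only the bare trace rules. Finally, to justify the phrase ``$2$-variable'', I would observe that fixing the solution $X_{d,S}$ pins down the trace parameters $x_1,\dots,x_{d-1}$ through G\'erardin's formula and sets $\zeta_{d,S}=1/|S|$, so that the only remaining free parameters in $\Delta_{d,S}$ are $u$ and $z$.
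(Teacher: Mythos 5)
Your proposal is correct and follows essentially the same route as the paper's own proof: reduction to the two Markov moves, conjugation invariance from the trace property, positive stabilization via the identity $D\sqrt{\lambda}\,z=1$, and negative stabilization via the $E$--condition factorization of Theorem~\ref{thme}, which is exactly where the paper also invokes it. The only differences are cosmetic: you add explicit well--definedness remarks and cite Eq.~\ref{Etrinvrs} directly, whereas the paper re--derives that formula inside the proof from Eq.~\ref{invrs}, Theorem~\ref{thme} and Eq.~\ref{egalpha}.
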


\begin{proof}
We need to show that $\Delta_{d, S}$ is well--defined on isotopy classes of oriented links. According to the Markov theorem, it suffices to prove that $\Delta_{d, S}$ is consistent with moves (i) and (ii). From the facts that $\epsilon(\alpha\alpha^\prime) = \epsilon(\alpha^\prime\alpha)$ and
 ${\rm tr}_d(ab)= {\rm tr}_d(ba)$, it follows that $\Delta_{d, S}$ respects move (i).
Let now $\alpha \in B_n$. Then $\alpha\sigma_n\in B_{n+1}$ and $\epsilon(\alpha\sigma_n)= \epsilon(\alpha)+1$. Hence:
$$
\Delta_{d, S}(\widehat{\alpha\sigma_n})
 = D^{n}(\sqrt{\lambda})^{\epsilon(\alpha\sigma_n)}\, {\rm tr}_d(\flat_{d,n}(\alpha \sigma_n))
 = D^{n}(\sqrt{\lambda})^{\epsilon(\alpha)+1}\, {\rm tr}_d(\flat_{d,n}(\alpha)g_n)
 = D \sqrt{\lambda}\, z \, \Delta_{d, S}(\widehat{\alpha})
$$
where we used that ${\rm tr}_d(\flat_{d,n} (\alpha)g_n) = z\, {\rm tr}(\flat_{d,n} (\alpha))$.  Now:
$$
z = \frac{(1-u)\zeta_{d,S}}{1-\lambda u},
$$
 so:
 $$
  D \sqrt{\lambda}\, z  = 1.
  $$
   Therefore, $\Delta_{d, S}(\widehat{\alpha\sigma_n}) = \Delta_{d, S}(\widehat{\alpha})$. Finally, we will prove that $\Delta_{d, S}(\widehat{\alpha\sigma_n^{-1}}) = \Delta_{d, S}(\widehat{\alpha})$.  Indeed:
$$
\Delta_{d, S}(\widehat{\alpha\sigma_n^{-1}})
= D^{n}(\sqrt{\lambda})^{\epsilon(\alpha\sigma_n^{-1})}{\rm tr}_d(\flat_{d,n}(\alpha \sigma_n^{-1}))
= D^{n}(\sqrt{\lambda})^{\epsilon(\alpha)-1}\, {\rm tr}_d(\flat_{d,n}(\alpha)g_n^{-1}).
$$
Resolving $g_n^{-1}$ from Eq.~\ref{invrs} we obtain:
$$
\Delta_{d, S} (\widehat{\alpha\sigma_n^{-1}})
=  D^{n}(\sqrt{\lambda})^{\epsilon(\alpha)-1}\,
\left[ z - (u^{-1}-1)\zeta_{d,S} + (u^{-1}-1)z \right]\, {\rm tr}_d(\flat_{d,n}(\alpha)).
$$
Also, from Theorem \ref{thme} and Eq. \ref{egalpha} we have:
$$
{\rm tr}_d( \flat_{d,n} (\alpha e_{d,n})) = \zeta_{d,S} \,{\rm tr}_d(\flat_{d,n} (\alpha))\quad \text{and} \quad
{\rm tr}_d( \flat_{d,n} (\alpha) e_{d,n}g_n) = z\, {\rm tr}_d(\flat_{d,n} (\alpha)).
$$
Therefore:
$$
\Delta_{d, S} (\widehat{\alpha\sigma_n^{-1}})
=  D^{n}(\sqrt{\lambda})^{\epsilon(\alpha)-1}\, \frac{z+(u-1)\zeta_{d,S}}{u}\, {\rm tr}_d(\flat_{d,n}(\alpha))
= \frac{D}{\sqrt{\lambda}} \, \frac{z+(u-1)\zeta_{d,S}}{u}\,\Delta_{d, S}(\widehat{\alpha}) =
$$
$\Delta_{d, S}(\widehat{\alpha})$.
Hence the proof is concluded.
\end{proof}

We have defined an infinite family of 2--variable isotopy invariants for oriented classical links.

\subsection{\it Computations}

 We shall first give some formulas that are useful for computations.
For powers of $g_i$ we can  deduce by induction the following formulae.

\begin{lem}\label{powers}
Let $m \in {\Bbb Z}, k \in {\Bbb N}$. (i) For $m$ positive, define $\alpha_m = (u-1)
\sum_{l=0}^{k-1}u^{2l}$ if $m=2k$ and $\beta_m = u(u-1)
\sum_{l=0}^{k-1}u^{2l}$ if $m=2k+1$. Then:
\[
g_i^{m} =\left\{
\begin{array}{l} 1 + \alpha_m\, e_{d,i} - \alpha_m \, e_{d,i}\,
g_i \quad \text{ \ if \ } m=2k \\
 g_i - \beta_m\, e_{d,i} + \beta_m \, e_{d,i} g_i \quad \text{ \ if \ } m=2k+1
\end{array}\right.
\]
(ii) For $m$ negative, define
$\alpha'_m = u^{-1}(u^{-1}-1) \sum_{l=0}^{k-1}u^{-2l}$ if $m=-2k$
and $\beta'_m = (u^{-1}-1) \sum_{l=0}^{k-1}u^{-2l}$ if $m=-2k+1$.
Then:
\[
g_i^{m} = \left\{
\begin{array}{l} 1 + \alpha'_m\, e_{d,i} - \alpha'_m \, e_{d,i}\,
g_i \quad \text{ \ if \ } m=-2k \\
 g_i - \beta'_m\, e_{d,i} + \beta'_m \, e_{d,i} g_i \quad \text{ \ if \ } m=-2k+1
\end{array}\right.
\]
\end{lem}

We now proceed with some basic computations.
 Clearly, for the unknot ${\rm O}$, $\Delta_{d, S} ({\rm O})=1$. For the Hopf link and the two trefoil knots we have:

\smallbreak
\noindent $\bullet$ Let ${\rm H}=\widehat{\sigma_1^2}$, the Hopf link. We find ${\rm tr }_d(g_1^2) =  1 + (u+1)(\zeta_{d,S} - z)$ and $\epsilon(\sigma_1^2) = 2$. Then:
$$
\Delta_{d, S}({\rm H}) =\frac{1-\lambda u}{(1-u)\zeta_{d,S}}\sqrt{\lambda}\, \left(1 + (u+1)(\zeta_{d,S} - z)\right) = z^{-1}\sqrt{\lambda}
\left(1 + (u+1)(\zeta_{d,S} - z)\right).
$$

\noindent $\bullet$ Let ${\rm T}=\widehat{\sigma_1^3}$, the right--handed trefoil.  From Lemma~\ref{powers} we have  $g_1^3 = g_1 -u(u-1)e_{d,1} + u(u-1)e_{d,1}g_1$. Hence:
${\rm tr}_d(g_1^3) = z -u(u-1)\zeta_{d,S} + u(u-1)z$. Moreover $\epsilon(\sigma_1^ 3)= 3$.
Then, using that $1-\lambda u = z^ {-1}\zeta_{d,S}(1-u)$, we obtain:
\begin{eqnarray*}
\Delta_{d, S}({\rm T}) & = & D(\sqrt{\lambda})^3\left[(u(u-1) + 1)z-u(u-1)\zeta_{d,S}\right]\\
& = &
\frac{\lambda}{z}\left[(u^2 -u + 1)z - (u^2 -u)\zeta_{d,S}\right].
\end{eqnarray*}

\smallbreak
\noindent $\bullet$ Let, finally, $-{\rm T} = \widehat{\sigma_1^{-3}}$, the left--handed trefoil.  From Lemma~\ref{powers} we have  $g_1^{-3} = g_1 - (u^{-1}-1)(u^{-2}+1)e_{d,1} + (u^{-1}-1)(u^{-2}+1) e_{d,1}g_1$. Hence:
${\rm tr}_d(g_1^{-3}) = z - (u^{-1}-1)(u^{-2}+1) \zeta_{d,S} + (u^{-1}-1)(u^{-2}+1) z$. Moreover $\epsilon(\sigma_1^{-3})= -3$.
Then we obtain:
\begin{eqnarray*}
\Delta_{d, S}(-{\rm T}) & = & D(\sqrt{\lambda})^{-3}\left[(u^{-3} -u^{-2} +u^{-1})z - (u^{-3} -u^{-2} + u^{-1} - 1)\zeta_{d,S}\right],
\end{eqnarray*}
where we recall that $D= \frac{1-\lambda u}{\sqrt{\lambda}(1-u)\zeta_{d,S}}$.

\subsection{\it A cubic skein relation for $\Delta_{d, S}$}

Let $L_{+}$, $L_{-}$, $L_0$ be diagrams of oriented links, which are all identical, except near one crossing, where they differ by the ways indicated in Figure~\ref{fig1}. We shall try to establish a skein relation satisfied by the invariant $\Delta_{d, S}$. Indeed, by the Alexander theorem we may assume that $L_+$ is in braided form and that $L_+ = \widehat{\beta \sigma_i}$ for some $\beta\in B_n$.  Also that $L_- = \widehat{\beta \sigma_i^{-1}}$ and that $L_0 = \widehat{\beta}$. Apply now relation Eq.~\ref{invrs} for the $g_i^{-1}$ in the expression below, noting that $\epsilon(\beta \sigma_i^{-1}) = \epsilon(\beta)-1$ and $\epsilon(\beta \sigma_i) = \epsilon(\beta)+1$:
\begin{eqnarray*}
\Delta_{d, S} (L_{-})  & = &  D^{n-1} (\sqrt{\lambda})^{\epsilon(\beta \sigma_i^{-1})} {\rm tr}_d (\flat_{d,n}(\beta) g_i^{-1}) \\
 &  = &
D^{n-1} (\sqrt{\lambda})^{\epsilon(\beta)-1}\big[
 {\rm tr}_d (\flat_{d,n}(\beta) g_i) - (u^{-1} - 1)\, {\rm tr}_d (\flat_{d,n}(\beta) e_{d,i}) \\
 &  &
 + (u^{-1} - 1)\, {\rm tr}_d (\flat_{d,n}(\beta) e_{d,i} \, g_i)\big] \\
 & = &
\frac{1}{\lambda} \Delta_{d, S} (L_{+})  - D^{n-1} (\sqrt{\lambda})^{\epsilon(\beta)-1} (u^{-1} - 1)\, {\rm tr}_d (\flat_{d,n}(\beta) e_{d,i}) \\
& &
+ D^{n-1} (\sqrt{\lambda})^{\epsilon(\beta)-1} (u^{-1} - 1)\, {\rm tr}_d (\flat_{d,n}(\beta) e_{d,i} \, g_i).
\end{eqnarray*}
The problem is that the algebra words $\flat_{d,n}(\beta) e_{d,i}$ and $\flat_{d,n}(\beta) e_{d,i} \, g_i$ do not have a natural lifting in the braid groups, even if we break the $e_{d,i}$'s according to Eq.~\ref{edi}. This was not the case in \cite{jula2}, where we were dealing with framed braids and all algebra generators had natural liftings in the framed braid groups. Yet, we have in the algebra ${\rm Y}_{d,n}$ the following `closed' relation (compare with \cite{fu}).

\begin{lem}\label{cubic}
The generators $g_i$ of the Yokonuma--Hecke algebra ${\rm Y}_{d,n}$ satisfy the cubic relations:
\begin{equation}\label{cubic1}
g_i^3 = -u g_i^2 + g_i +u
\end{equation}
Equivalently,
\begin{equation}\label{cubic2}
g_i^{-1}  = u^{-1} g_i^2 + g_i - u^{-1}
\end{equation}
\end{lem}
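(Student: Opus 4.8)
The plan is to derive the cubic relation Eq.~\ref{cubic1} purely from the quadratic relation Eq.~\ref{quadr}, exploiting the two structural facts about $e_{d,i}$ already recorded in the excerpt: that $e_{d,i}$ is idempotent, and that it commutes with $g_i$ (the case $j=i$ of Eq.~\ref{edirels}). Writing $e = e_{d,i}$ and $g = g_i$ for brevity, the starting point is the quadratic relation $g^2 = 1 + (u-1)e - (u-1)eg$.

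First I would compute $g^3 = g\cdot g^2$. Since $e$ commutes with $g$ and $e^2 = e$, left-multiplying the quadratic relation by $e$ gives $eg^2 = ue - (u-1)eg$. Then $g^3 = g\left(1 + (u-1)e - (u-1)eg\right) = g + (u-1)eg - (u-1)eg^2$, and substituting the expression just found for $eg^2$ collapses the two $eg$-terms into one, yielding $g^3 = g + u(u-1)eg - u(u-1)e$. This is precisely the $m=3$ formula already recorded in Lemma~\ref{powers}, which serves as a consistency check.

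The key step, and the only non-mechanical one, is to eliminate the idempotent and obtain a relation in the $g_i$ alone. I would observe that $u(u-1)eg - u(u-1)e = -u\,(u-1)e(1-g)$, and that the quadratic relation rearranges \emph{exactly} to $(u-1)e(1-g) = g^2 - 1$. Substituting gives $u(u-1)eg - u(u-1)e = -u(g^2-1) = u - ug^2$, so that every occurrence of $e$ cancels and $g^3 = g + u - ug^2 = -ug^2 + g + u$, which is Eq.~\ref{cubic1}. The main obstacle is conceptual rather than computational: one must recognize that the troublesome combination $u(u-1)(eg-e)$ is $u$ times the difference of the two sides of the quadratic relation, which is what makes the $e$-dependence disappear and produces a genuinely \emph{closed} relation.

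Finally, for the equivalent form Eq.~\ref{cubic2}, I would multiply the proposed identity $g^{-1} = u^{-1}g^2 + g - u^{-1}$ by $g$; since $g$ is invertible (as already exhibited by Eq.~\ref{invrs}), this is equivalent to $g^3 + ug^2 - g - u = 0$, that is, to Eq.~\ref{cubic1}. Equivalently, one may note the factorization $g^3 + ug^2 - g - u = (g^2-1)(g+u)$, which exhibits the three eigenvalues $1,-1,-u$ of $g_i$ and makes the cubic nature of the relation transparent.
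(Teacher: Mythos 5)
Your proof is correct and follows essentially the same route as the paper's: both expand $g_i^3$ via the quadratic relation Eq.~\ref{quadr} (using idempotence and commutation of $e_{d,i}$ with $g_i$, i.e.\ the $m=3$ case of Lemma~\ref{powers}) and then eliminate $e_{d,i}$ by substituting $g_i^2-1$ for $(u-1)e_{d,i}-(u-1)e_{d,i}g_i$, which is exactly the paper's key replacement step. Your additional verification of the equivalence between Eqs.~\ref{cubic1} and~\ref{cubic2} (via invertibility of $g_i$, or the factorization $(g_i^2-1)(g_i+u)$) is a minor supplement that the paper leaves implicit.
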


\begin{proof}
>From Lemma~\ref{powers} we find the relation $g_i^3 = g_i + (u-1) e_{d,i} g_i -(u-1) e_{d,i} g_i^2$. Substituting Eq.~\ref{quadr} and replacing the expression $(u-1) e_{d,i} -(u-1) e_{d,i} g_i$ by the expression  $g_i^2 - 1$ we arrive at the stated cubic relation.
\end{proof}

\smallbreak
\begin{figure}[H]
\begin{center}
 
\begin{picture}(230,80)
\put(29,80){\line(-1,-1){28}}
\put(17,63){\line(1,-1){11}}
\put(0,80){\line(1,-1){11}}

\put(28,52){\vector(-1,-1){28}}
\put(18,35){\vector(1,-1){11}}
\put(0,52){\line(1,-1){11}}


\put(120,80){\vector(-1,-1){50}}
\put(98,52){\vector(1,-1){22}}
\put(70,80){\line(1,-1){22}}


\put(160,80){\vector(0,-1){50}}
\put(190, 80){\vector(0,-1){50}}


\put(258,58){\line(1,1){22}}
\put(252,52){\vector(-1,-1){22}}
\put(230,80){\vector(1,-1){50}}

\put(12,10){$L_{+ +}$}
\put(90,10){$L_{+}$}
\put(170,10){$L_{0}$}
\put(250,10){$L_{-}$}

\end{picture}
\caption{$L_{++}$, $L_+$, $L_0$ and $L_-$}\label{fig1}
\end{center}
\end{figure}
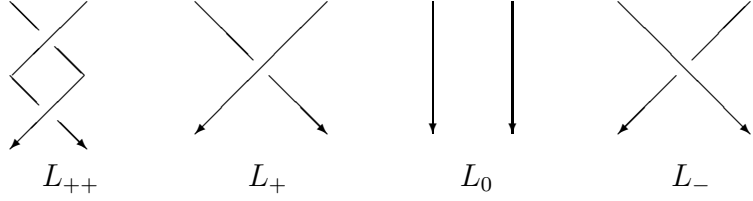

We then have the following result.

\begin{prop}
The invariant $\Delta_{d, S}$ satisfies the following cubic skein relation:
\begin{equation}\label{skein}
\sqrt{\lambda}\, \Delta_{d, S} (L_{-}) =
\frac{1}{\lambda u}\, \Delta_{d, S} (L_{++})  + \frac{1}{\sqrt{\lambda}}\, \Delta_{d, S} (L_{+})
- \frac{1}{u}\, \Delta_{d, S} (L_{0}).
\end{equation}
\end{prop}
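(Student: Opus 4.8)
The plan is to read off all four invariants $\Delta_{d,S}(L_{++})$, $\Delta_{d,S}(L_+)$, $\Delta_{d,S}(L_0)$, $\Delta_{d,S}(L_-)$ straight from the definition of $\Delta_{d,S}$, and then to substitute the cubic relation Eq.~\ref{cubic2} into the single negative crossing carried by $L_-$. The conceptual point is the one flagged just before Lemma~\ref{cubic}: resolving $g_i^{-1}$ by Eq.~\ref{invrs} produces the words $\flat_{d,n}(\beta)\,e_{d,i}$ and $\flat_{d,n}(\beta)\,e_{d,i}\,g_i$, which have no natural braid lifting, whereas Eq.~\ref{cubic2} rewrites $g_i^{-1}$ purely as a combination of $1$, $g_i$ and $g_i^2$. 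Each of these lifts to a genuine braid word and hence to a genuine closed-braid diagram, and this is exactly why the auxiliary diagram $L_{++}=\widehat{\beta\sigma_i^2}$ is introduced in Figure~\ref{fig1}.

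First I would invoke the Alexander theorem to fix a braid $\beta\in B_n$ with $L_+=\widehat{\beta\sigma_i}$, $L_-=\widehat{\beta\sigma_i^{-1}}$, $L_0=\widehat{\beta}$ and $L_{++}=\widehat{\beta\sigma_i^2}$. Recording the exponent sums $\epsilon(\beta\sigma_i^{\pm1})=\epsilon(\beta)\pm1$ and $\epsilon(\beta\sigma_i^{2})=\epsilon(\beta)+2$, the defining formula for $\Delta_{d,S}$ then gives each invariant as the common prefactor $D^{n-1}(\sqrt{\lambda})^{\epsilon(\beta)}$ times an appropriate power of $\sqrt{\lambda}$ and the corresponding trace value; explicitly $\Delta_{d,S}(L_+)=D^{n-1}(\sqrt{\lambda})^{\epsilon(\beta)+1}{\rm tr}_d(\flat_{d,n}(\beta)g_i)$, $\Delta_{d,S}(L_{++})=D^{n-1}(\sqrt{\lambda})^{\epsilon(\beta)+2}{\rm tr}_d(\flat_{d,n}(\beta)g_i^{2})$, and $\Delta_{d,S}(L_0)=D^{n-1}(\sqrt{\lambda})^{\epsilon(\beta)}{\rm tr}_d(\flat_{d,n}(\beta))$.

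Next I would start from $\sqrt{\lambda}\,\Delta_{d,S}(L_-)=D^{n-1}(\sqrt{\lambda})^{\epsilon(\beta)}{\rm tr}_d(\flat_{d,n}(\beta)g_i^{-1})$, apply $g_i^{-1}=u^{-1}g_i^{2}+g_i-u^{-1}$ from Eq.~\ref{cubic2}, and distribute the trace by linearity into three pieces involving ${\rm tr}_d(\flat_{d,n}(\beta)g_i^{2})$, ${\rm tr}_d(\flat_{d,n}(\beta)g_i)$ and ${\rm tr}_d(\flat_{d,n}(\beta))$. Matching each piece against the expressions above amounts to inserting the missing powers of $\sqrt{\lambda}$: the $g_i^{2}$ term carries a deficit of $(\sqrt{\lambda})^{2}=\lambda$ relative to $L_{++}$, producing the coefficient $\tfrac{1}{\lambda u}$; the $g_i$ term a deficit of $\sqrt{\lambda}$ relative to $L_+$, producing $\tfrac{1}{\sqrt{\lambda}}$; and the constant term matches $L_0$ exactly, producing $-\tfrac{1}{u}$. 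Collecting these yields precisely Eq.~\ref{skein}. I do not expect a genuine obstacle here — once Lemma~\ref{cubic} is available the argument is pure bookkeeping of the scaling factors $D^{n-1}$ and the powers of $\sqrt{\lambda}$; the only substantive input, namely replacing the geometrically meaningless $e_{d,i}$-terms by the cubic relation, has already been supplied by Lemma~\ref{cubic}.
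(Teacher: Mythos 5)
Your proposal is correct and follows essentially the same route as the paper's proof: represent $L_0,L_+,L_{++},L_-$ as closures of $\beta,\beta\sigma_i,\beta\sigma_i^2,\beta\sigma_i^{-1}$, substitute the cubic relation Eq.~\ref{cubic2} for $g_i^{-1}$ inside the trace, and match powers of $\sqrt{\lambda}$ against the defining formula for $\Delta_{d,S}$. The only cosmetic difference is that you multiply by $\sqrt{\lambda}$ at the outset, whereas the paper computes $\Delta_{d,S}(L_-)$ first and the stated relation follows by clearing that factor.
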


\begin{proof}
By the same reasoning as above we may assume that $L_0 = \widehat{\beta}$ for some $\beta\in B_n$. Also that $L_+ = \widehat{\beta \sigma_i}$, $L_{++}  = \widehat{\beta \sigma_i^2}$ and $L_- = \widehat{\beta \sigma_i^{-1}}$. Apply now Eq.~\ref{cubic2} from Lemma~\ref{cubic} in the expression below, noting that $\epsilon(\beta \sigma_i^{-1}) = \epsilon(\beta)-1$, $\epsilon(\beta \sigma_i) = \epsilon(\beta)+1$ and $\epsilon(\beta \sigma_i^2) = \epsilon(\beta)+2$.
\begin{eqnarray*}
\Delta_{d, S} (L_{-}) &  = &  D^{n-1} (\sqrt{\lambda})^{\epsilon(\beta \sigma_i^{-1})} {\rm tr}_d (\flat_{d,n}(\beta) g_i^{-1}) \\
&  = &
D^{n-1} (\sqrt{\lambda})^{\epsilon(\beta)-1}
\left[ u^{-1}\, {\rm tr}_d (\flat_{d,n}(\beta) g_i^2) + {\rm tr}_d (\flat_{d,n}(\beta) g_i) -
u^{-1}\, {\rm tr}_d (\flat_{d,n}(\beta)) \right] \\
&  = &
\frac{1}{(\sqrt{\lambda})^3 u} \Delta_{d, S} (L_{++})  + \frac{1}{\lambda} \Delta_{d, S} (L_{+})
- \frac{1}{\sqrt{\lambda} u} \Delta_{d, S} (L_{0}).
\end{eqnarray*}
\end{proof}

\subsection{\it An isotopy invariant from ${\rm tr}_{\infty}$}

In this subsection we extend the values of the invariants $\Delta_{d, S}$ to the adelic context. By Eq.~\ref{adelicrepbn} the braid group $B_n$ is represented in ${\rm Y}_{\infty,n}=  \varprojlim_{d\in {\Bbb N}^{\sim}} {\rm Y}_{d,n}$ via the map
$\flat_{\infty,n} = \varprojlim_{d\in {\Bbb N}^{\sim}} \flat_{d,n}$. Further, by Theorem~\ref{ptrace}, elements in ${\rm Y}_{\infty,n}$ map, via the Markov trace ${\rm tr}_{\infty,n} =  \varprojlim_{d\in {\Bbb N}^{\sim}} {\rm tr}_{d,n}$, in the ring $\varprojlim_{d\in {\Bbb N}^{\sim}} R[X_d]$, where $R = {\Bbb C}[z]$.

 For any $d\vert d^{\prime}$, now, the connecting ring epimorphism $\xi_d^{d^{\prime}}$ (recall Eq.~\ref{xi}) yields a connecting epimorphism $\Xi_d^{d^{\prime}}$ from the ring of rational functions ${\Bbb C}(z,X_{d^{\prime}})$ to the ring of rational functions ${\Bbb C}(z,X_d)$.

\begin{lem}\label{gammalift}
The following diagram is commutative.
\begin{equation}
\begin{diagram}
\node{{\mathcal L}}
\arrow{e,t}{\Delta_{d^{\prime},S}} \arrow{s,l}{\rm Id} \node{{\Bbb C}\left(z, X_{d^{\prime}}\right)}
\arrow{s,r}{\Xi_d^{d^{\prime}}}\\
\node{{\mathcal L}} \arrow{e,b}{\Delta_{d,S}} \node{{\Bbb C}\left(z,X_{d}\right)}
\end{diagram}
\end{equation}
\end{lem}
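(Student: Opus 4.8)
The plan is to verify the commutativity pointwise on $\mathcal L$. By the Alexander theorem every oriented link is a closure $\widehat\alpha$ with $\alpha\in B_n$ for some $n$, and the left vertical arrow is the identity, so it suffices to show $\Xi_d^{d'}\bigl(\Delta_{d',S}(\widehat\alpha)\bigr)=\Delta_{d,S}(\widehat\alpha)$ for every such $\alpha$. Here the top invariant $\Delta_{d',S}$ is understood with respect to the lifted solution $X_{d,d',S}=X_{d',S^d_{d'}}$ of Eq.~\ref{lif}, so that $S':=S^d_{d'}$ is the parameter set on the $d'$-level; I would first record this and recall the defining formula $\Delta_{d,S}(\widehat\alpha)=D^{n-1}(\sqrt\lambda)^{\epsilon(\alpha)}\,{\rm tr}_d(\flat_{d,n}(\alpha))$, in which $\lambda$ and $D$ are the fixed rational expressions in $z$, $u$ and $\zeta_{d,S}={\rm tr}_d(e_{d,i})$ introduced just before Eq.~\ref{delta}. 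Crucially, $\epsilon(\alpha)$ is a combinatorial quantity attached to the braid word and is literally the same integer on both levels, so only the trace factor and the scalars $\lambda,D$ have to be transported by $\Xi_d^{d'}$.

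Next I would assemble two compatibility identities already present in the text. From the commutative diagram Eq.~\ref{rep} the braid representations agree under the connecting maps, $\rho_d^{d'}\circ\flat_{d',n}=\flat_{d,n}$ on $B_n$ (both send $\sigma_i$ to $g_i$, and $\rho_d^{d'}$ fixes the $g_i$). From the commutative diagram Eq.~\ref{tra} the traces are intertwined, $\xi_d^{d'}\circ{\rm tr}_{d',n}={\rm tr}_{d,n}\circ\rho_d^{d'}$. Composing the two gives $\xi_d^{d'}\bigl({\rm tr}_{d'}(\flat_{d',n}(\alpha))\bigr)={\rm tr}_d(\flat_{d,n}(\alpha))$, and applying the very same pair of diagrams to the element $e_{d',i}$, together with the Lemma establishing $\rho_d^{d'}(e_{d',i})=e_{d,i}$, yields $\xi_d^{d'}(\zeta_{d',S'})=\xi_d^{d'}({\rm tr}_{d'}(e_{d',i}))={\rm tr}_d(e_{d,i})=\zeta_{d,S}$. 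Since $\Xi_d^{d'}$ extends $\xi_d^{d'}$ to the function fields and fixes $z$ and the scalar $u$, it therefore carries $\lambda$ and $D$ for the $d'$-level to $\lambda$ and $D$ for the $d$-level, and, on the extension in which $\sqrt\lambda$ lives, $\sqrt{\lambda}$ to $\sqrt{\lambda}$ under a consistent choice of branch. Multiplying the transported pieces reconstitutes $\Delta_{d,S}(\widehat\alpha)$, which is the desired equality.

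I expect the genuine difficulty to be conceptual rather than computational: making precise the sense in which $\zeta_{d',S'}$ is carried to $\zeta_{d,S}$. Numerically these equal $1/|S'|$ and $1/|S|$ with $|S'|=|S|\,d'/d$ (compare the size count after Eq.~\ref{lif} and Lemma~\ref{cohlif}), so they are \emph{not} the same complex number; the identity $\Xi_d^{d'}(\zeta_{d',S'})=\zeta_{d,S}$ holds only when $\zeta$ is read as the symbolic trace ${\rm tr}(e_i)$ inside the function field, i.e. when $\Delta_{d,S}$ is regarded as an element of ${\Bbb C}(z,X_d)$ via $\zeta_{d,S}\leftrightarrow \tfrac1d\sum_{m=0}^{d-1}x_mx_{d-m}$, before any specialization of the $x$'s to the values yielding Eq.~\ref{valedi}. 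Once this reading is fixed the argument is the purely formal transport above; the only remaining loose end is the bookkeeping for $\sqrt\lambda$, where one must either adjoin the square root to both function fields and extend $\Xi_d^{d'}$ accordingly, or split off the factor $\lambda^{\lfloor\epsilon(\alpha)/2\rfloor}$ (which lies in the field) from a single $\sqrt\lambda$, using that the parity of $\epsilon(\alpha)$ is the same on both levels.
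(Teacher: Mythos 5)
Your argument is correct, and the first thing to say is that the paper itself offers no proof of Lemma~\ref{gammalift} at all: the lemma is stated bare, and the inverse limit is taken immediately afterwards. So there is no ``paper proof'' to compare against; what you wrote is evidently the argument the authors intend, namely reduce to closed braids $\widehat\alpha$ by the Alexander theorem and transport the trace factor through the two commutative diagrams of the paper, $\rho_d^{d^{\prime}}\circ\flat_{d^{\prime},n}=\flat_{d,n}$ (Eq.~\ref{rep}) and $\xi_d^{d^{\prime}}\circ{\rm tr}_{d^{\prime},n}={\rm tr}_{d,n}\circ\rho_d^{d^{\prime}}$ (Eq.~\ref{tra}), together with the unnumbered lemma $\rho_d^{d^{\prime}}(e_{d^{\prime},i})=e_{d,i}$ to move the scalars $\zeta$, $\lambda$ and $D$ across. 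The most valuable part of your write--up is the caveat in your last paragraph, which is not pedantry but precisely the point on which the lemma stands or falls: if $\zeta_{d^{\prime},S^{\prime}}$ and $\zeta_{d,S}$ are read as the complex numbers $1/\vert S^{\prime}\vert$ and $1/\vert S\vert$ of Eq.~\ref{valedi} (the reading under which Theorem~\ref{invariant} makes $\Delta_{d,S}$ well defined on $\mathcal L$), then both $\Delta_{d^{\prime},S}(\widehat\alpha)$ and $\Delta_{d,S}(\widehat\alpha)$ lie in the subfield generated by $z$ and $\sqrt\lambda$, on which $\Xi_d^{d^{\prime}}$ acts as the identity, and since $1/\vert S^{\prime}\vert = d/(d^{\prime}\vert S\vert)\neq 1/\vert S\vert$ the square genuinely fails to commute (already for the Hopf link the two sides are different functions of $z$). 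Commutativity holds only in the symbolic reading you adopt, where $\zeta_d$ means the polynomial ${\rm tr}_d(e_{d,i})=\frac{1}{d}\sum_{m}x_mx_{d-m}\in R[X_d]$; there your chain $\Xi_d^{d^{\prime}}(\zeta_{d^{\prime}})=\zeta_d$, hence $\lambda_{d^{\prime}}\mapsto\lambda_d$ and $D_{d^{\prime}}\mapsto D_d$, together with $\xi_d^{d^{\prime}}({\rm tr}_{d^{\prime}}(\flat_{d^{\prime},n}(\alpha)))={\rm tr}_d(\flat_{d,n}(\alpha))$, is a complete proof, and the fact that the stated codomains are the function fields ${\Bbb C}(z,X_d)$ rather than ${\Bbb C}(z)$ supports that this is the intended reading. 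Your bookkeeping for $\sqrt\lambda$ (adjoin it to both fields with $\sqrt{\lambda_{d^{\prime}}}\mapsto\sqrt{\lambda_d}$, or use that the total exponent of $\sqrt\lambda$ has the same parity on both levels) is also fine.

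One residual tension is worth flagging, though it lies in the paper's formulation and not in your proof: in the symbolic reading the $x$'s do not satisfy the $E$--condition, so Theorem~\ref{thme} is unavailable and $\Delta_{d,S}$ is then a priori only a well--defined function of a braid word, not of its closure in $\mathcal L$; descent to $\mathcal L$ requires specialization, which is exactly the reading under which the square does not commute. Your pointwise verification on braid representatives $\widehat\alpha$ is therefore the strongest statement available, and it is the statement actually used afterwards, since the coherence of the sequences $({\rm tr}_d(\flat_{d,n}(\alpha)))_d$ under the maps $\Xi_d^{d^{\prime}}$ is what Theorem~\ref{padicinv} needs in order to land in $R_{\infty}$.
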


  We shall further denote by $R_{\infty}$ the field of fractions of $\varprojlim_{d\in {\Bbb N}^{\sim}} R[{\rm X}_d]$. Taking now inverse limits in the diagram of Lemma~\ref{gammalift} we obtain the map $\Delta_{\infty,S} := \varprojlim_{d\in {\Bbb N}^{\sim}} \Delta_{d,S}$  and we have the following.

\begin{thm}\label{padicinv}
If for all $d$ the set ${\rm X}_{d}$ satisfies the $E$--condition, then the map
$$
\begin{array}{ccccl}
\Delta_{\infty,S} & : &  {\mathcal L} & \longrightarrow &R_{\infty} \\
& & \widehat{\alpha} & \mapsto & (\Delta_{d,S}(\widehat{\alpha}), \Delta_{d^{\prime},S}(\widehat{\alpha}),\ldots)
\end{array}
$$
for any $\alpha\in  \cup_nB_n$ is an isotopy invariant of oriented links in $S^3$.
Moreover:
$$
\Delta_{\infty,S}(\widehat{\alpha}) = \left(\frac{1-\lambda u}{\sqrt{\lambda}(1-u)\zeta_{d,S}}\right)^{n-1}
(\sqrt{\lambda})^{\epsilon(\alpha)} {\rm tr}_{\infty} (\flat_{\infty,n}(\alpha)) = D^{n-1} (\sqrt{\lambda})^{\epsilon(\alpha)}  {\rm tr}_{\infty} (\flat_{\infty,n}(\alpha)).
$$
\end{thm}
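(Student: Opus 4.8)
The whole statement should follow formally from the fact that $\Delta_{\infty,S}$ is, by construction, the inverse limit $\varprojlim_{d}\Delta_{d,S}$ of maps already known (Theorem~\ref{invariant}) to be invariants. First I would settle well-definedness as a map into $R_{\infty}$. Fix a link and write it as $\widehat\alpha$ for some $\alpha\in\cup_n B_n$ (Alexander theorem). For each pair $d\mid d'$, Lemma~\ref{gammalift} gives $\Xi_d^{d'}\bigl(\Delta_{d',S}(\widehat\alpha)\bigr)=\Delta_{d,S}(\widehat\alpha)$. Hence the tuple $\bigl(\Delta_{d,S}(\widehat\alpha)\bigr)_{d\in{\Bbb N}^{\sim}}$ is compatible with the connecting epimorphisms $\Xi_d^{d'}$, so it is a genuine element of the inverse limit of the rings ${\Bbb C}(z,X_d)$, which sits inside $R_{\infty}$. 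This shows that the assignment $\widehat\alpha\mapsto\bigl(\Delta_{d,S}(\widehat\alpha)\bigr)_d$ indeed lands in $R_{\infty}$, so that $\Delta_{\infty,S}=\varprojlim_d\Delta_{d,S}$ makes sense on ${\mathcal L}$.

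Next I would prove isotopy invariance, which is inherited componentwise. By Theorem~\ref{invariant}, for every $d$ the value $\Delta_{d,S}(\widehat\alpha)$ depends only on the isotopy class of the closed braid; equivalently, by the Markov theorem, it is unchanged under conjugation and under the moves $\alpha\mapsto\alpha\sigma_n^{\pm1}$. Therefore, if two braids $\alpha$ and $\beta$ have isotopic closures, then $\Delta_{d,S}(\widehat\alpha)=\Delta_{d,S}(\widehat\beta)$ for every $d$, and since equality in $\varprojlim_d{\Bbb C}(z,X_d)$ is equality in each component, we get $\Delta_{\infty,S}(\widehat\alpha)=\Delta_{\infty,S}(\widehat\beta)$. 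Thus $\Delta_{\infty,S}$ is well defined on isotopy classes, i.e.\ it is an isotopy invariant of oriented links in $S^3$.

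For the explicit adelic formula I would pass the level-$d$ formula to the limit. For each $d$ the definition of $\Delta_{d,S}$ reads $\Delta_{d,S}(\widehat\alpha)=D^{n-1}(\sqrt\lambda)^{\epsilon(\alpha)}\,{\rm tr}_d\bigl(\flat_{d,n}(\alpha)\bigr)$, where $n$ and the exponent sum $\epsilon(\alpha)$ depend only on the braid word $\alpha$, not on $d$. By Eq.~\ref{adelicrepbn} we have $\flat_{\infty,n}=\varprojlim_d\flat_{d,n}$, and by Theorem~\ref{ptrace} we have ${\rm tr}_{\infty}=\varprojlim_d{\rm tr}_d$, so the $d$-component of ${\rm tr}_{\infty}(\flat_{\infty,n}(\alpha))$ is precisely ${\rm tr}_d(\flat_{d,n}(\alpha))$. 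Reading the displayed formula componentwise then identifies $\Delta_{\infty,S}(\widehat\alpha)$ with $D^{n-1}(\sqrt\lambda)^{\epsilon(\alpha)}\,{\rm tr}_{\infty}(\flat_{\infty,n}(\alpha))$, as claimed.

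The main obstacle I anticipate is bookkeeping of the rescaling scalars rather than topology: the normalizing factors $\zeta_{d,S}=1/|S|$, $\lambda$ and $D$ genuinely vary with $d$, since under the lifting $S\mapsto S^d_{d'}$ one has $|S^d_{d'}|=|S|\,d'/d$, so $\zeta$ is not constant along the tower. Hence one cannot claim that the scalar prefactor by itself forms a coherent sequence. The correct point to emphasize is that coherence of the \emph{entire} product $D^{n-1}(\sqrt\lambda)^{\epsilon(\alpha)}{\rm tr}_d(\flat_{d,n}(\alpha))$ is exactly what Lemma~\ref{gammalift} delivers, so the displayed formula must be understood as the coherent sequence of level-$d$ values with the apparently $d$-dependent constants read componentwise. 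Verifying that Lemma~\ref{gammalift} really covers the rescaled and normalized invariant, and not merely the bare trace, is where care is needed.
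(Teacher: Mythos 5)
Your proof is correct and is essentially the paper's own proof: invariance is inherited componentwise from Theorem~\ref{invariant} (Markov equivalent braids have equal images under every $\Delta_{d,S}$, and equality in an inverse limit is checked componentwise), and the displayed formula is obtained by identifying the $d$--component of ${\rm tr}_{\infty}(\flat_{\infty,n}(\alpha))$ with ${\rm tr}_d(\flat_{d,n}(\alpha))$. The only ingredient of the paper's proof you omit is the opening appeal to Lemma~\ref{cohlif}, which serves merely to guarantee that the hypothesis (a solution of the $E$--system for every $d$, coherently chosen) is non--vacuous.

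One remark on your closing caveat, which is the only place you part company with the paper. You assert that the prefactor $D^{n-1}(\sqrt{\lambda})^{\epsilon(\alpha)}$ cannot be treated as a single scalar because $\zeta_{d,S}=1/\vert S\vert$ varies along the tower; the paper, however, performs exactly this factoring in the last two lines of its computation. The factoring is legitimate provided $\zeta$ is read not as the complex number $1/\vert S\vert$ but as the trace polynomial $\zeta_d := {\rm tr}_d(e_{d,n}) = \frac{1}{d}E_d^{(0)}\in R[X_d]$. Indeed, by the commutativity of diagram Eq.~\ref{tra} together with $\rho_d^{d^{\prime}}(e_{d^{\prime},n})=e_{d,n}$ one has
$$
\xi_d^{d^{\prime}}(\zeta_{d^{\prime}}) \;=\; \xi_d^{d^{\prime}}\bigl({\rm tr}_{d^{\prime}}(e_{d^{\prime},n})\bigr)
\;=\; {\rm tr}_d\bigl(\rho_d^{d^{\prime}}(e_{d^{\prime},n})\bigr) \;=\; {\rm tr}_d(e_{d,n}) \;=\; \zeta_d
$$
(concretely, $\xi_d^{d^{\prime}}$ collapses the $d^{\prime}/d$ terms of $\frac{1}{d^{\prime}}E_{d^{\prime}}^{(0)}$ lying over each residue class modulo $d$ onto the corresponding term of $\frac{1}{d}E_d^{(0)}$). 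Hence $(\zeta_d)_d$ is a coherent sequence, so $\lambda$ and $D$ define genuine scalars in $R_{\infty}$ (up to the extraction of $\sqrt{\lambda}$, an issue already present at each finite level $d$), and pulling $D^{n-1}(\sqrt{\lambda})^{\epsilon(\alpha)}$ out of the tuple is ordinary scalar multiplication. The numerical discrepancy you point out --- $1/\vert S\vert$ at level $d$ versus $d/(d^{\prime}\vert S\vert)$ at level $d^{\prime}$ --- reflects the fact that evaluation at the lifted solutions $X_{d,d^{\prime},S}$ does not commute with the connecting maps $\xi_d^{d^{\prime}}$; coherence in $R_{\infty}$ is a statement about polynomials in the variables $x_a$, not about their values at a chosen solution. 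With this reading, your componentwise interpretation of the displayed formula and the paper's factored form coincide, so your argument needs no repair.
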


\begin{proof}
By  Lemma~\ref{cohlif} we have non--trivial solutions of the $E$--system in the adelic context.
Let now  $\beta, \alpha  \in  \cup_nB_n$ be  Markov equivalent braids. Then, any isotopy invariant  agrees on the closures $\widehat{\beta}$ and $\widehat{\alpha}$. So, $\Delta_{d,S} (\widehat{\beta}) = \Delta_{d,S} (\widehat{\alpha})$, $\Delta_{d^{\prime},S} (\widehat{\beta}) = \Delta_{d^{\prime},S} (\widehat{\alpha})$, etc. Hence: $\Delta_{\infty,S} (\widehat{\alpha}) = \Delta_{\infty,S} (\widehat{\beta})$. Moreover, we have:

\begin{eqnarray*}
\Delta_{\infty,S} (\widehat{\alpha})
& = &
(\Delta_{d,S}(\widehat{\alpha}), \Delta_{d^{\prime},S}(\widehat{\alpha}),\ldots) \\
& = &
 ( D^{n-1} (\sqrt{\lambda})^{\epsilon(\alpha)} {\rm tr}_d(\flat_{d,n}(\alpha)), D^{n-1} (\sqrt{\lambda})^{\epsilon(\alpha)} {\rm tr}_{d^{\prime}}(\flat_{d^{\prime},n}(\alpha)),\ldots)\\
& = &
 D^{n-1} (\sqrt{\lambda})^{\epsilon(\alpha)} ({\rm tr}_d(\flat_{d,n}(\alpha)), {\rm tr}_{d^{\prime}}(\flat_{d^{\prime},n}(\alpha)), \ldots)\\
& = &
 D^{n-1} (\sqrt{\lambda})^{\epsilon(\alpha)}  {\rm tr}_{\infty} (\flat_{\infty,n}(\alpha)).
 \end{eqnarray*}
\end{proof}

The link invariant $\Delta_{\infty,S}$ is an adelic extension of the Jones polynomial.


\end{document}